\documentclass[a4paper,14pt]{amsart}
\usepackage[english]{babel}
\usepackage[utf8]{inputenc}

\usepackage{amssymb}
\usepackage{amsmath}
\usepackage{amsthm}
\usepackage{bbm}

\usepackage{enumerate}
\usepackage{tikz}
\usepackage{marginnote}
\usepackage{calligra}

\usepackage{color}

\newcommand{\hide}[1]{} 
%


\newcommand{\bbN}{\mathbb{N}}

\newcommand{\bbR}{\mathbb{R}}

\newcommand{\calA}{\mathcal{A}}

\newcommand{\calG}{\mathcal{G}}

\newcommand{\calK}{\mathcal{K}}
\newcommand{\calL}{\mathcal{L}}

\newcommand{\calT}{\mathcal{T}}
\newcommand{\calU}{\mathcal{U}}

\DeclareMathOperator{\id}{id}

\newcommand{\dx}{\;\mathrm{d}}

\newcommand{\sa}{{\operatorname{sa}}}


\newcommand{\norm}[1]{\left\lVert #1\right\rVert}

\def\eins{\mathbf{1}}
 

\theoremstyle{definition}
\newtheorem{definition}{Definition}[section]

\newtheorem{example}[definition]{Example}

\newtheorem{open_problem}[definition]{Open Problem}

\theoremstyle{plain}
\newtheorem{proposition}[definition]{Proposition}

\newtheorem{theorem}[definition]{Theorem}
\newtheorem{corollary}[definition]{Corollary}

\numberwithin{equation}{section}

\begin{document}

\title[Almost Interior Points and Positive Semigroups]{Almost Interior Points in Ordered Banach Spaces and the Long--Term Behaviour of Strongly Positive Operator Semigroups}

\author{Jochen Gl\"uck}
\address{Jochen Gl\"uck, Institut f\"ur Angewandte Analysis, Universit\"at Ulm, 89069 Ulm, Germany}
\email{jochen.glueck@alumni.uni-ulm.de}

\author{Martin R.\ Weber}
\address{Martin R.\ Weber, Institut f\"ur Analysis, Technische Universit\"at Dresden, 01062 Dresden, Germany}
\email{martin.weber@tu-dresden.de}

\date{\today}

\begin{abstract}
	The first part of this article is a brief survey of the properties of so-called almost interior points in ordered Banach spaces. Those vectors can be seen as a generalization of ``functions which are strictly positive almost everywhere'' on $L^p$-spaces and of ``quasi-interior points'' in Banach lattices. 
	
	In the second part we study the long--term behaviour of strongly positive operator semigroups on ordered Banach spaces; these are semigroups which, in a sense, map every non-zero positive vector to an almost interior point. Using the Jacobs--de Leeuw--Glicksberg decomposition together with the theory presented in the first part of the paper we deduce sufficiency criteria for such semigroups to converge (strongly or in operator norm) as time tends to infinity. This generalises known results for semigroups on Banach lattices as well as on normally ordered Banach spaces with unit.
\end{abstract}

\subjclass[2010]{Primary 46B40; Secondary 47D06, 47B65}

\keywords{Almost-interior points, quasi-interior points, ordered Banach space, positive operator semigroup, strict positivity, strong positivity, long-time behaviour}

\maketitle

\section{Introduction}
By an \emph{operator semigroup} -- more precisely a \emph{one-parameter operator semigroup} --
we mean a family $(T_t)_{t \in J}$ of bounded linear operators on a Banach space $X$, which satisfies 
the assumption $T_0 = \id$ and $T_{t+s} =T_t T_s$ 
for all $t,s \in J$; here, $J$ is either the set $\bbN_0 := \{0,1,2,\dots\}$ (in this case the dynamical system is modelled in discrete time) 
or the set $[0,\infty)$ (then the dynamical system is modelled in continuous time). 
In the case of continuous time it is often sensible to impose some kind of continuity assumption on the mapping $t \mapsto T_t$ which leads, 
for instance, to the theory of so-called $C_0$-semigroups.
Our two main results (Theorems~\ref{thm:strong-convergence} and~\ref{thm:uniform-convergence}) are true for both 
cases $J = \bbN_0$ and $J = [0,\infty)$; in the latter case, we do not need the assumption that the semigroup be strongly continuous 
in either of those theorems. 
However, strong continuity can sometimes be helpful to check an important assumption of Theorem~\ref{thm:strong-convergence}, 
compare Corollary~\ref{cor:perturbed-semigroup}.

It happens quite frequently that a Banach space $X$  carries an additional order structure 
which renders it a so-called \emph{ordered Banach space} (for precise definitions see Section \ref{section:ordered-banach-spaces-and-strong-positivity}).  

A bounded linear operator $T$ on an ordered Banach space $X$ is called \emph{positive} if $T$ leaves the cone of positive vectors in $X$ 
invariant, and an operator semigroup $(T_t)_{t\in J}$ on such a space 
is called \emph{positive} if it respects the order structure on $X$, i.e.\ if for each $t$ the operator $T_t$ maps positive 
vectors to positive vectors.

It is a classical insight in 
analysis that positivity of an operator semigroup $(T_t)_{t \in J}$ has far reaching consequences for its long-time behaviour; 
more precisely, when combined with other appropriate assumptions positivity is often a powerful tool to prove that $T_t$ 
converges (strongly or with respect to the operator norm) as time $t$ tends to $\infty$.

\subsection{Strong positivity assumptions and almost interior points} 
One way to obtain convergence of a semigroup is to assume that it 
improves, in a sense, the positivity of vectors it is applied to. This 
idea occurs, for instance, in \cite[Theorem~6.3]{Krein1950}, 
\cite{Makarow2000} and \cite[Theorem~4.3]{Gerlach2013}. 
Our paper uses the same approach, but in a very general setting. 

To make this notion of ``positivity improving semigroups'' or ``strongly positive semigroups'' precise one needs to distinguish several 
grades of positivity within an ordered Banach space $X$. 
If $X$ is, for instance, an $L^p$-space for some $p \in [1,\infty)$  over a $\sigma$-finite measure space, we might 
consider a function $0 \le f \in L^p$ (i.e. $f(\omega)\geq 0$ for almost all $\omega$) to be ``strongly positive'' 
if $f(\omega) > 0$ for almost all $\omega$. 
\par
Similarly, in the space of continuous real-valued functions over a given compact Hausdorff space $Q$, a function $f \ge 0$ could be considered ``strongly positive'' 
if $f(\omega) > 0$ for each $\omega \in Q$ which, by the compactness of $Q$, is equivalent to the existence of a number $\varepsilon > 0$ such that 
$f(\omega) \ge \varepsilon$ for all $\omega \in Q$. In the theory of ordered Banach spaces properties of this kind  
can be generalised to the notion of an \emph{almost interior point} of the positive cone, which is a vector $f$ in 
the positive cone such that $\langle \varphi, f\rangle > 0$ for every non-zero positive functional $\varphi$. 
This class of vectors is particularly well-studied on Banach lattices where it coincides with the class 
of \emph{quasi-interior points}; we refer for instance to \cite[Section~II.6]{Schaefer1974} for a detailed study of quasi-interior points.

In ordered Banach spaces, almost interior points have also been studied on many occasions, but a survey paper or even a book 
chapter which discusses them in detail seems to be missing. In particular, a very useful theorem of Abdelaziz and Alekhno about 
the existence of positive vectors which are \emph{not} almost interior points seems to be widely unknown. 
Thus, we use Section~\ref{section:ordered-banach-spaces-and-strong-positivity} to give a survey about almost interior points in ordered 
Banach spaces. 
Our treatment is far from being comprehensive, but we think it can 
serve as a useful source of reference for those who want to find the basic 
properties of almost interior points together with some non-trivial but essential theorems and a variety of references in one place. 
The rest of the paper, i.e.\ Sections~\ref{section:jdlg} to \ref{section:uniform-convergence-of-positive-semigroups} are dedicated 
to convergence results of operator semigroups which are ``strongly positive'' or ``positivity improving'' in the sense that the orbit 
of each non-zero positive vector under such a semigroup contains an almost interior point. 
Our main results are Theorems~\ref{thm:strong-convergence} and~\ref{thm:uniform-convergence}.

\subsection{Other approaches to the long term behaviour of positive semigroups} 
It is important to distinguish our approach from others in the literature which do not rely on strong positivity 
of the semigroup but, instead, on structural assumptions on the positive cone and on stronger regularity assumptions on the semigroups. 
Many results of this type are proved in the setting of Banach lattices and, 
in this context, convergence can for instance be proved by mainly spectral theoretic methods (see e.g.\ \cite[Chapter~C-IV]{Arendt1986}, 
\cite[Theorem~4]{Lotz1986}, \cite[Corollary~3.8]{Keicher2006}, \cite[Theorem~4.2]{Gerlach2013} and \cite{Mischler2016}) 
or by employing the structure of certain classes of positive operators 
(see e.g.\ \cite[Corollary~3.11]{Greiner1982}, \cite{Gerlach2017, GerlachConvPOS}). 

Related results are also available on more general ordered Banach sopaces, as long as the cone satisfies appropriate geometric 
assumptions; see for instance, \cite[Proposition~5 and Theorem~6]{Abdelaziz1975}, \cite{Veitsblit1985, Veitsblit1985a}, 
\cite[Corollary~2.3]{Bartoszek1992} and \cite[Part~II]{GlueckDISS} 

Besides, a somewhat special role is played by $L^1$-spaces and, 
more generally, by spaces whose norm is \emph{additive} on the positive 
cone (meaning that $\norm{u+v} = \norm{u} + \norm{v}$ for all $u,v
 \ge 0$). Such cones are said to {\it admit plastering} or to be  
{\it well-based}. Details about these spaces can, for instance, be found
 in \cite{Krasnoselskii1960}, \cite[Chapter VII]{Wulich2017} and 
\cite[Sections 3.8 and 3.9]{Jameson1970}.

For positive operator semigroups on ordered Banach spaces with additive norm very special methods for the analysis of their long-term 
behaviour are available. 
These are, for instance, stochastic approaches on $L^1$-spaces (see e.g.\ \cite[Theorems~1 and~2]{Pichor2000}, 
\cite[Theorems~1 and~2]{Kulik2015}) and, on more general spaces, results relying on so-called 
\emph{lower bounds methods} (see e.g.\ \cite[Theorem~2]{Lasota1982}, \cite[Theorem~1.1]{Ding2003}, 
\cite[Theorem~1.1 and Corollary~3.6]{GerlachLB}, \cite[Section~4]{GlueckWolffLB}) 
and results results relying on Dobrushin's ergodicity coefficient (see e.g.\ \cite{ErkursunOezcan2018} and the references therein).

Although our approach in this paper does not yield special results on spaces with additive norms, 
we find it worthwhile pointing out that those spaces are closely related, by means of duality, to spaces 
with \emph{order units} which are discussed in Section~\ref{section:ordered-banach-spaces-and-strong-positivity} 
as special cases of almost interior points.

\subsection*{Preliminaries} 
Throughout we use the following notations and conventions: we set $\bbN := \{1,2,\dots\}$ and $\bbN_0 := \bbN \cup \{0\}$. 
If $X, Y$ are  Banach spaces, then $X'$ 
denotes the \emph{dual space} of $X$ and $\calL(X;Y)$ denotes the space of bounded linear operators from $X$ to $Y$, 
which is equipped with the operator norm. We set $\calL(X):=\calL(X;X)$. 
For $x \in X$ and $x' \in X'$, the operator $x' \otimes x \in \calL(X)$ is defined by $(x' \otimes x) z = \langle x', z\rangle x$ 
for all $z \in X$.
Unless otherwise noted, the underlying scalar field of all occurring 
Banach spaces is assumed to be real. 

Let $(T_t)_{t\in J}$ be a semigroup of operators in $\calL(X)$ and $T\in \calL(X)$. 
We will say that $(T_t)_{t\in J}$ 
\begin{enumerate}[\upshape (i)]
\item \emph{strongly converges} to the operator $T$ if $\norm{T_tx-Tx}\to 0$ for any $x\in X$,  
\item \emph{uniformly converges} to the operator $T$ if $\norm{T_t -T}\to 0$,
\item \emph{converges} to the operator $T$ with respect to the weak operator topology if $|\langle T_tx-Tx, x'\rangle | \to 0$ 
for any $x\in X,\; x'\in X'$,
\end{enumerate}
in all cases, as $t$ tends to infinity.

Further notation is introduced when needed; in particular, Section~\ref{section:ordered-banach-spaces-and-strong-positivity} recalls the most 
important notions from the theory of ordered Banach spaces. 
\section{Ordered Banach spaces and almost interior points} \label{section:ordered-banach-spaces-and-strong-positivity}
\subsection{Ordered Banach spaces and duality} 

By an \emph{ordered Banach space} we mean a pair $(X,X_+)$ where $X$ is a real Banach space (whose norm we suppress in the notation $(X,X_+)$) and $X_+$ is a non-empty closed subset of $X$ such that $\alpha X_+ + \beta X_+ \subseteq X_+$ for all scalars $\alpha,\beta \ge 0$ and such that $X_+ \cap (-X_+) = \{0\}$. The set $X_+$ is called the \emph{positive cone} in $X$. In order to keep the notation as simple as possible we often simply call $X$ an ordered Banach space and thereby suppress $X_+$ in the notation. 
Although many notions and results that we mention in the following remain valid in the more general setting of \emph{ordered normed spaces} or in the setting of \emph{ordered vector spaces}, 
we restrict ourselves to ordered Banach spaces here since our main results all fit into this setting.

Let $X$ be an ordered Banach space. As is well-known, the positive cone $X_+$ induces a partial order $\le$ on $X$ which 
is given by $x \le y$ if and only if $y-x \in X_+$. 
A vector $x \in X$ is called \emph{positive} if $x \in X_+$ (equivalently, $x \ge 0$). 
Moreover, we use the notation $x < y$ for $x, y\in X$ to indicate that $x \le y$ 
but $x \not= y$. For $x,z \in X$ we call the set $[x,z] := \{y \in X: \; x \le y \le z\}$ the \emph{order interval} 
between $x$ and $z$. 

The cone $X_+$ in the ordered Banach space $X$ is called \emph{total} or \emph{spatial} if $X_+ - X_+$ is dense in $X$, and the 
cone is called \emph{generating} if $X_+ - X_+ = X$. The cone $X_+$ is called \emph{normal} if every order interval in $X$ 
is norm bounded. 

As explained in the introduction, a bounded linear operator $T: X\to Y$ between two ordered Banach space $X$ and $Y$ is 
called \emph{positive} if $TX_+ \subseteq Y_+$, and a semigroup $(T_t)_{t \in J}$ on an ordered Banach space $X$ 
(where $J = \bbN_0$ or $J = [0,\infty)$) is called \emph{positive} if the operator $T_t$ is 
positive for every time $t \in J$. 
We also note that an operator semigroup $(T_t)_{t \in J}$ on a Banach space $X$ is called \emph{bounded} 
if $\sup_{t \in J} \norm{T_t} < \infty$. 

\bigskip
Let $X'$ be the dual space of an ordered Banach space $X$. We set 
\[X'_+ := \{x' \in X': \; \langle x', x \rangle \ge 0 \text{ for all } x \in X_+\}.\] 
The elements of $X'_+$ are called the \emph{positive functionals} on $X$; we write $x' \ge 0$ to indicate that a functional $x'$ is positive. 
The set $X'_+$ is a closed convex subset of $X'$ which is invariant under multiplication by positive scalars. 
We have $X'_+ \cap (- X'_+) = \{0\}$ if and only if the cone $X_+$ in $X$ is total. In this case $X'_+$ renders the dual space $X'$ also an ordered Banach space, 
and $X'_+$ is called the \emph{dual cone} of $X_+$. 
\par
For separation of two subsets in a normed space we use the following version of the 
Hahn-Banach theorem also known as the theorem of Eidelheit (see \cite{Ei36}, \cite[Theorem~II.2.3]{Wulich2017}).

\begin{theorem}\label{separation theorem}  
Let $A$ and $B$ be non-empty convex subsets in a normed space $X$ over $\bbR$. Assume that $A$ is open and $A\cap B=\emptyset$. 
Then there exists a functional $x' \in X'$ and a real number $\gamma$ such that $\langle x', a \rangle < \gamma \le \langle x', b\rangle$ 
for all $a \in A$ and all $b \in B$.
\end{theorem}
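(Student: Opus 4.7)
My plan is to deduce this from the classical Minkowski-gauge form of the Hahn--Banach theorem. First I reduce the two-set separation to separating the origin from a single open convex set; then I construct the desired functional as a Hahn--Banach extension of a linear functional initially defined on a one-dimensional subspace; finally I upgrade the inequality on $A$ to a strict one using openness of $A$.

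For the reduction, I introduce the difference set $C := B - A = \{b - a : a \in A,\, b \in B\}$. Writing $C = \bigcup_{b \in B}(b - A)$ shows $C$ is open as a union of translates of the open set $-A$; it is convex and non-empty because $A$ and $B$ are; and $0 \notin C$ since $A \cap B = \emptyset$. I then fix some $c_0 \in C$ and set $U := c_0 - C$, which is an open convex neighbourhood of $0$ with $c_0 \notin U$. Its Minkowski functional $p(x) := \inf\{t > 0 : x \in tU\}$ is sublinear, continuous (since $U$ contains a norm-ball around $0$), and satisfies $p(c_0) \geq 1$. Defining $f(tc_0) := t$ on the line $\bbR c_0$ gives $f \leq p$ there, so Hahn--Banach produces a linear extension $\tilde f : X \to \bbR$ with $\tilde f \leq p$; combined with $\tilde f(-x) \leq p(-x)$, the continuity of $p$ yields $\tilde f \in X'$, and $\tilde f(c_0) = 1$ in particular shows $\tilde f \neq 0$.

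For every $c \in C$ one has $c_0 - c \in U$, and since $U$ is open, $p(c_0 - c) < 1$; hence $\tilde f(c) > 0$, which rewrites as $\tilde f(a) < \tilde f(b)$ for all $a \in A$ and $b \in B$. Setting $\gamma := \sup_{a \in A} \tilde f(a)$ yields $\tilde f(a) \leq \gamma \leq \tilde f(b)$ for all such $a, b$, and the strict inequality $\tilde f(a) < \gamma$ for $a \in A$ follows because the non-zero continuous linear functional $\tilde f$ is an open map onto $\bbR$, so $\tilde f(A)$ is open in $\bbR$ and cannot attain its supremum. I expect the main technical obstacle to be the Minkowski-gauge bookkeeping (sublinearity, continuity, and in particular the strict bound $p(u) < 1$ for $u \in U$, which rests on openness of $U$); the companion delicacy is the closing open-mapping step on $A$, which is the only place where openness of $A$ is genuinely used and without which the conclusion would degrade to two weak inequalities.
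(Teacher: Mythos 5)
Your proof is correct. Note that the paper itself does not prove this statement at all: it is quoted as the Eidelheit separation theorem with references to \cite{Ei36} and \cite[Theorem~II.2.3]{Wulich2017}, so there is no in-paper argument to compare against. Your argument is the standard gauge-functional proof, and the delicate points are all handled properly: the difference set $C=B-A$ is open as a union of translates of $-A$ and misses $0$; the Minkowski functional of $U=c_0-C$ is sublinear and Lipschitz because $U$ contains a norm ball; the strict bound $p(u)<1$ for $u\in U$ does indeed require openness of $U$ (via $(1+\varepsilon)u\in U$ for small $\varepsilon$); and the final upgrade from $\tilde f(a)\le\gamma$ to $\tilde f(a)<\gamma$ correctly uses that a non-zero continuous linear functional is an open map, so $\tilde f(A)$ is an open subset of $\bbR$ and cannot contain its supremum. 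This is a complete and self-contained proof of the cited result.
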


We always have the following characterisation of positive elements in $X$, no matter whether $X_+$ is total or not:

\begin{proposition} \label{prop:positive-vectors-by-duality}
	Let $X$ be an ordered Banach space and let $x_0 \in X$. 
	Then $x_0 \in X_+$ if and only if $\langle x',x_0\rangle \ge 0$ for all $x' \in X'_+$.
\end{proposition}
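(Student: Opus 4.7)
The forward direction is immediate: if $x_0 \in X_+$, then $\langle x', x_0\rangle \ge 0$ for every $x' \in X'_+$ by the very definition of $X'_+$. So the plan concentrates on the non-trivial converse.

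For the converse, I would argue by contraposition: assume $x_0 \notin X_+$ and construct an $x' \in X'_+$ with $\langle x', x_0 \rangle < 0$. Since $X_+$ is closed and $x_0 \notin X_+$, there is a radius $r > 0$ such that the open ball $A := \{x \in X : \norm{x - x_0} < r\}$ is disjoint from $X_+$. Both $A$ and $B := X_+$ are non-empty convex sets, $A$ is open, and $A \cap B = \emptyset$, so Eidelheit's theorem (Theorem~\ref{separation theorem}) yields a functional $x' \in X'$ and a real number $\gamma$ with
\[
\langle x', a\rangle < \gamma \le \langle x', b\rangle \qquad \text{for all } a \in A, \ b \in X_+.
\]

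From the second inequality, taking $b = 0 \in X_+$ gives $\gamma \le 0$, so the first inequality applied at $a = x_0$ yields $\langle x', x_0\rangle < 0$. It remains to verify $x' \in X'_+$. For any fixed $b \in X_+$ and any $\lambda > 0$ we have $\lambda b \in X_+$, hence $\gamma \le \lambda \langle x', b\rangle$; if $\langle x', b\rangle$ were negative the right-hand side would tend to $-\infty$ as $\lambda \to \infty$, a contradiction. Thus $\langle x', b\rangle \ge 0$ for every $b \in X_+$, i.e.\ $x' \in X'_+$, and this functional violates the hypothesis $\langle x', x_0\rangle \ge 0$ for all $x' \in X'_+$.

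The only delicate point is the correct choice of the two sets for the separation theorem: we need one of them to be open, which is why we thicken $\{x_0\}$ to an open ball (possible because $X_+$ is \emph{closed}; without closedness of the cone one would not get the strict inequality at $x_0$). Apart from that, the proof is a textbook application of Hahn--Banach separation together with the standard trick of scaling $b \in X_+$ by arbitrary positive $\lambda$ to force the separating functional to be positive.
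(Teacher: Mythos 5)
Your proof is correct and follows essentially the same route as the paper: separate an open convex neighbourhood of $x_0$ from the closed cone $X_+$ via Eidelheit's theorem, use $0 \in X_+$ to get $\langle x', x_0\rangle < 0$, and use positive scaling of elements of $X_+$ to force positivity of the separating functional. The only cosmetic difference is that the paper divides $n\langle x',x\rangle \ge \alpha$ by $n$ and lets $n \to \infty$, whereas you argue by contradiction with $\lambda \to \infty$; these are the same estimate.
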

\begin{proof}
	The implication ``$\Rightarrow$'' is obvious. 
	For the proof of the converse implication ``$\Leftarrow$'' assume $x_0\notin X_+$.  
	There exists an open convex neighborhood $A$ of $x_0$ such that $A\cap X_+=\emptyset$, and by applying the Eidelheit 
        theorem to the sets $A$ and $B=X_+$ we find a functional $x' \in X'$ and a real number $\alpha$ such that
	\begin{align*}
	\langle x', x_0 \rangle < \alpha \leq  \langle x',x \rangle \qquad \text{for all } x \in X_+.
	\end{align*}
	Since $0 \in X_+$, we conclude that $\alpha \le 0$, so $\langle x',x_0\rangle < 0$. 
	It only remains to show that $x'$ is positive. Let $x \in X_+$. For every $n \in \bbN$ we have $nx \in X_+$, so $n \langle x',x \rangle \ge \alpha$. 
	Dividing by $n$ and letting $n \to \infty$, we can see that $\langle x',x\rangle \ge 0$, so $x'\ge 0$.
\end{proof}

\begin{corollary} \label{cor:positive-functional-which-does-not-vanish-on-given-vector}
	Let $X$ be an ordered Banach space and let $x$ be a non-zero vector in $X$. 
	Then there exists a positive functional $x' \in X'_+$ such that $\langle x', x \rangle \not= 0$.
\end{corollary}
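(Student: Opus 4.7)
The plan is to reduce this directly to Proposition~\ref{prop:positive-vectors-by-duality} by a case distinction based on whether $x$ lies in the positive cone or not.

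First, observe that since $X_+ \cap (-X_+) = \{0\}$ and $x \neq 0$, at least one of the two vectors $x$ and $-x$ fails to be positive. So after possibly replacing $x$ by $-x$ (which only changes the sign of $\langle x',x\rangle$ and hence is harmless for the conclusion $\langle x',x\rangle \neq 0$), I may assume $x \notin X_+$.

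Now I can apply Proposition~\ref{prop:positive-vectors-by-duality} to $x_0 := x$: the contrapositive of that proposition yields the existence of some $x' \in X'_+$ for which $\langle x', x\rangle < 0$, and in particular $\langle x', x\rangle \neq 0$, which is precisely the assertion.

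There is no real obstacle here; the corollary is essentially a repackaging of Proposition~\ref{prop:positive-vectors-by-duality}. The only subtlety worth noting in the write-up is that the proposition as stated does not a priori say anything about non-zero vectors inside $X_+$, which is why the trick of passing to $-x$ in that case is needed. This step uses precisely the defining property $X_+ \cap (-X_+) = \{0\}$ of the cone.
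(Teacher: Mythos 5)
Your proof is correct and is essentially the paper's own argument in contrapositive form: both reduce the claim to Proposition~\ref{prop:positive-vectors-by-duality} applied to $x$ and $-x$, using the pointedness $X_+ \cap (-X_+) = \{0\}$ to rule out the degenerate case. The paper phrases it as a contradiction (if every positive functional vanished on $x$, then $x \ge 0$ and $-x \ge 0$, so $x = 0$), while you make the case distinction up front; the content is the same.
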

\begin{proof}
	If $\langle x',x\rangle = 0$ for all $x' \in X'_+$, then it follows from Proposition~\ref{prop:positive-vectors-by-duality} 
	that $x \ge 0$ and $-x \ge 0$, so $x = 0$.
\end{proof}

\subsection{Almost interior points and related concepts}

The following definitions of several types of special points in ordered Banach spaces are essential for our further investigations.  

For a vector $u\geq 0$ in an ordered Banach space $X$ the set 
\begin{align*}
	X_u  = \{y \in X: \, \exists \lambda \in [0,\infty) \text{ such that } \pm y \le \lambda u\}
\end{align*} 
is a vector subspace which is called the \emph{subspace} (in $X$) \emph{of bounded elements with respect to $u$} or the \emph{principal ideal generated by $u$}. 
It is clear that $X_u=\bigcup_{\lambda \in [0,\infty)} [-\lambda u,\lambda u]$. 
\begin{definition} \label{def:versions-of-interior-points}
	Let $X$ be an ordered Banach space.
	\begin{enumerate}
		\item[(i)] A functional $x' \in X'$ is called \emph{strictly positive} if $\langle x',x\rangle > 0$ for every $0 < x \in X$.
		\item[(ii)] A vector $x \in X$ is called an \emph{almost interior point\footnote{\,Unfortunately, the terminology is by far 
		not unanimous: in \cite{Krasnoselskii1989} such a point is called quasi-interior!} of $X_+$} if $\langle x',x\rangle > 0$ for every non-zero positive functional $x' \in X'$. 
		
		\item[(iii)] A vector $x \in X$ is called an \emph{order unit} if for each $y\in X$ there exists a real number $\varepsilon>0$ such that $x\geq \varepsilon y$.   	

		\item[(iv)] A vector $x\in X$ is called a \emph{quasi-interior point of $X_+$} if $x\geq 0$ and the set $ X_x$ is dense in $X$.

		\item[(v)] A vector $x \in X$ is called an \emph{interior point} of $X_+$ if it is contained in the topological interior ${\rm int}(X_+)$ of $X_+$. 
	\end{enumerate}
\end{definition}
We note that almost interior points of $X_+$ and order units always belong to $X_+$, and that every strictly positive functional is positive.  
If for some $u$ the subspace $X_u$ coincides with $X$ then $u$ is an order unit, 
see Proposition \ref{prop:characterisation-of-order-units}. 
A vector $x\in X$ is an almost interior point of $X_+$ if (and only if) $x'\in X'_+$ and $\langle x',x\rangle=0$ 
implies $x'=0$. 
Moreover, every quasi-interior point $x$ of $X_+$ is an almost interior point of $X_+$. 
  Indeed, let $x \in X_+$ be a quasi-interior point and let $x' \in X'_+$ such that $\langle x',x\rangle = 0$. 
Then $\langle x',y\rangle = 0$ for all $y \in [0,x]$ and therefore, $\langle x',z\rangle = 0$ for all $z \in [-x,x]$, since each such $z$ can be written as
\begin{align*}
	z = \frac{x+z}{2} - \frac{x-z}{2} \in [0,x] - [0,x].
\end{align*}
Hence, $x'$ vanishes on the dense subset $X_x$ 
and thus on the entire space $X$, so $x' = 0$. 

One might ask whether, conversely, every almost interior point of $X_+$ is also a quasi-interior point of $X_+$. Schaefer remarks  
in \cite[p.\,136, assertion~(K)]{Schaefer1960} that this is not true, in general; he attributes this observation to Klee. 
Another counterexample can be found in \cite[Section~3.6]{Krasnoselskii1989}. 
However, in both counterexamples the positive cone is only total, but not generating. 
Thus, the following questions seem still to be open:

\begin{open_problem} \label{openproblem:almost-vs-quasi-interior-points}
	Let $(X,X_+)$ be an ordered Banach space with generating cone.
	\begin{enumerate}[\upshape (a)]
		\item Is every almost interior point of $X_+$ also a quasi-interior point of $X_+$?
		\item In case that the answer to question~(a) is negative, does it become positive if we assume, in addition, that the positive cone is normal? 
	\end{enumerate}
\end{open_problem}

Corollary~\ref{cor:almost-interior-points-and-interior-points} below shows that the answer to question~(a) above is affirmative in case that 
the positive cone $X_+$ has non-empty topological interior, i.e. ${\rm int}(X_+)\neq \emptyset$. 
Moreover, it is well-known (and not difficult to show by means of a quotient space argument) that the answer to question~(a) is 
affirmative in case that $(X,X_+)$ is a Banach lattice (see e.g.\ \cite[Theorem~II.6.3]{Schaefer1974} for details).

The following characterisation of almost interior points also gives an interesting perspective on the relation between 
almost interior and quasi-interior points; it can be found in \cite[Theorem~1.2]{Bakhtin1968}: 
a point $x\in X_+$ is almost interior if and only if $\overline{X_++\{\bbR x\}}=X$, i.e.\ if and only if the union of 
the ``semi-bounded'' intervals $\{y \in X: \, y \ge -\lambda x\}$ for $\lambda \in [0,\infty)$ is dense in $X$.

From an operator theoretic perspective, almost interior points seem to be much more accessible than quasi-interior points; 
since the former are also the more general concept,
we focus on almost interior points throughout the article. 

The set of all almost interior points supplemented by the zero vector is a cone (more precisely, a subcone of $X_+$) in $X$. 
If the set of almost interior points is not empty then it is dense in $X_+$. 
Indeed, if $u$ is an almost interior point and $x\in X_+$ then $x+\varepsilon u$ 
is an almost interior point for all $\varepsilon > 0$.

The existence of almost interior points plays a remarkable role in solving some questions on the extension of 
linear functionals to positive ones, see Theorem \ref{thm:extending-functionals-via-almost-interior-points}. 
Let us recall the following sufficient criteria for the existence of quasi-interior 
(and, according to what was mentioned just before, also almost interior) points and strictly positive functionals; 
see \cite{Bakhtin1968}, \cite{Krein1950} and \cite[Kapitel~II]{Wulich2017}. 
For the convenience of the reader we include the proofs, where in (b) we prove even the existence of a quasi-interior point - 
a little more than in the original paper \cite{Bakhtin1968}.
\begin{theorem}\label{existence-theorem}
	Let $X$ be an ordered Banach space and assume that $X$ is separable.
	\begin{enumerate}[\upshape (a)]
		\item There exists a strictly positive functional in $X'$.
		\item If the positive cone $X_+$ is total, then there exists a quasi-interior point of $X_+$.
	\end{enumerate}
\end{theorem}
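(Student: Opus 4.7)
My plan for (a) is a covering argument via the Lindel\"of property of separable metric spaces. For every $\varphi \in X'_+$ the set
\[
 U_\varphi := \{x \in X_+ \setminus \{0\} : \langle \varphi, x \rangle > 0\}
\]
is relatively open in $X_+ \setminus \{0\}$ by continuity of $\varphi$, and Corollary~\ref{cor:positive-functional-which-does-not-vanish-on-given-vector} (combined with the fact that any positive functional applied to a positive vector is non-negative, so non-vanishing here means strictly positive) guarantees that every $x \in X_+ \setminus \{0\}$ lies in some $U_\varphi$. Thus $(U_\varphi)_{\varphi \in X'_+}$ is an open cover of $X_+\setminus\{0\}$. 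Since $X$ is separable metric, so is its subspace $X_+\setminus\{0\}$, which is therefore Lindel\"of; I extract a countable subcover $(U_{\varphi_n})_{n \in \bbN}$. Normalising $\norm{\varphi_n} = 1$ and defining $\varphi := \sum_{n=1}^\infty 2^{-n}\varphi_n$, I obtain a norm-convergent series whose limit lies in $X'_+$ because the cone $X'_+$ is closed. For any $x \in X_+ \setminus \{0\}$ some $n$ satisfies $\langle \varphi_n, x\rangle > 0$, so $\langle \varphi, x\rangle \ge 2^{-n} \langle \varphi_n, x\rangle > 0$, which is strict positivity.

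For (b), I would use a direct series construction. Totality plus $X \neq \{0\}$ forces $X_+ \neq \{0\}$, so the nonzero vectors are dense in $X_+$ (since $\tfrac{1}{k}y_0 \to 0$ for any fixed $y_0 \in X_+\setminus\{0\}$); by separability I choose a countable dense subset $(y_n)_{n\in\bbN}\subseteq X_+\setminus\{0\}$ of $X_+$. Put
\[
 u := \sum_{n=1}^\infty 2^{-n}\, y_n/\norm{y_n}.
\]
The series converges absolutely in $X$ with $\norm{u}\le 1$, and its limit lies in $X_+$ since $X_+$ is closed. For each fixed $n$ the vector $u - 2^{-n}y_n/\norm{y_n}$ is a norm-convergent sum of positive terms, hence positive, which yields $y_n \le 2^n \norm{y_n}\, u$ and therefore $y_n \in X_u$. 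Consequently $\{y_n : n\in\bbN\}\subseteq X_u$, and density in $X_+$ gives $X_+ \subseteq \overline{X_u}$. Since $X_u$ is a vector subspace its closure is also a closed subspace, so $\overline{X_u}\supseteq X_+ - X_+$, which is dense in $X$ by totality, and I conclude $\overline{X_u} = X$.

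The main conceptual step in (a) is to recognise that the ``non-vanishing loci'' $(U_\varphi)_\varphi$ form an open cover of $X_+\setminus\{0\}$ so that the problem reduces to extracting a countable subcover, which is where separability enters in an essential way. The naive direct construction that matches functionals $\varphi_n$ to a dense sequence $(x_n)\subseteq X_+$ appears to fail, because the ratios $\langle \varphi_n, x_n\rangle / \norm{\varphi_n}$ need not be uniformly bounded below, so density of $(x_n)$ does not force $\langle \varphi_n, x\rangle > 0$ for nearby $x$. In (b) the only delicate point is the cone-monotonicity bound $y_n \le 2^n\norm{y_n}\, u$, and this is a consequence of the closedness of $X_+$ together with the convergence of the tail of the defining series.
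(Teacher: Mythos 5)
Your proof is correct. Part (b) is essentially identical to the paper's argument: the same weighted series $\sum_n 2^{-n}y_n/\norm{y_n}$ over a dense sequence in $X_+\setminus\{0\}$, the same tail-positivity bound $y_n \le 2^n\norm{y_n}u$, and the same closure-of-a-subspace plus totality finish. Part (a), however, takes a genuinely different route. The paper works on the dual side: separability of $X$ makes the weak${}^*$ topology on the dual unit ball $B'$ metrizable and separable, so one picks a weak${}^*$-dense sequence $(x_n')$ in $B'\cap X_+'$, forms $x'=\sum_n 2^{-n}x_n'$, and uses Corollary~\ref{cor:positive-functional-which-does-not-vanish-on-given-vector} at the \emph{end} to conclude that a positive vector annihilated by all of $X_+'$ is zero. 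You instead work on the primal side: the sets $U_\varphi$ cover $X_+\setminus\{0\}$ (here Corollary~\ref{cor:positive-functional-which-does-not-vanish-on-given-vector} enters at the \emph{start}), and the Lindel\"of property of the separable metric space $X_+\setminus\{0\}$ extracts the countable family of functionals. Both arguments are clean; yours avoids any discussion of the weak${}^*$ topology and its metrizability, at the cost of invoking the Lindel\"of/second-countability argument, while the paper's dual-side construction is slightly more self-contained given that it has already set up the dual cone. Your closing remark that the ``naive'' matching of functionals to a dense sequence of vectors fails is a fair observation, but note that it does not apply to the paper's construction, which takes the dense sequence in the dual cone rather than in $X_+$.
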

\begin{proof}
	We may assume that $X \not= \{0\}$.
		
	(a) Let $B'$ denote the closed unit ball in $X'$ and endow $B'$ with the weak${}^*$-topology. 
	The separability of $X$ implies that $B'$ is metrizable and separable. 
        Hence, the subset $B' \cap X'_+$ of $B'$ is also separable 
	with respect to the weak${}^*$-topology. 
	Let $(x_n')_{n \in \bbN}$ be a weak${}^*$-dense  sequence in $B' \cap X'_+$. 
	Define
	\begin{align*}
		x' := \sum_{n \in \bbN} \frac{1}{2^n}x'_n.
	\end{align*}
	Then $x'$ is a positive functional on $X$. We are going to show that $x'$ is strictly positive. To this end, let $x \in X_+$ be such that 
	$\langle x',x\rangle = 0$. 
        Since $x$ is positive, this implies that $\langle x_n', x\rangle = 0$ for all $n \in \bbN$, thus $\langle x',x\rangle = 0$ for all $x' \in B' \cap X'_+$ and hence, 
        even $\langle x',x\rangle = 0$ for all $x' \in X'_+$. 
        According to Corollary~\ref{cor:positive-functional-which-does-not-vanish-on-given-vector} this implies that $x = 0$, 
        so $x'$ is indeed strictly positive.
	
	(b)\footnote{\, For a slightly more general type of ordered topological vector spaces, assertion (b) can be found in \cite[paragraph V.7.6]{Schaefer1971}.} 
	Since $X$ is separable as a metric space, so is its subset $X_+ \setminus \{0\}$. Hence, there exists a sequence 
	$(x_n)_{n \in \bbN}, \, x_n>0$, which is dense in $X_+$. 
	We define
	\begin{align*}
		x := \sum_{n \in \bbN} \frac{1}{2^n\norm{x_n}}x_n.
	\end{align*}
	Let $X_x$ be as defined at the beginning of Subsection~2.2.  
	We first prove that the closure $\overline{X_x}$ of $X_x$ contains the positive cone $X_+$; to this end, let $y \in X_+$ and let $\varepsilon > 0$. 
	Then there exists an index $k \in \bbN$ such that $\norm{x_k - y} < \varepsilon$. On the other hand we have
	\begin{align*}
		0 \le x_k \le \norm{x_k}2^k \sum_{n \in \bbN} \frac{x_n}{\norm{x_n} 2^n} = \norm{x_k}2^k\, x,
	\end{align*}
	so $x_k \in \left[0,\norm{x_k}2^kx\right] \subseteq \left[-\norm{x_k}2^kx, \norm{x_k}2^kx\right] \subseteq X_x$. 
	This proves that $y \in \overline{X_x}$ and thus $X_+ \subseteq \overline{X_x}$. Since $X_x$ is readily seen to be a vector subspace of $X$, so is its closure. 
	We conclude that $X_+ - X_+ \subseteq \overline{X_x}$ and hence, due to the totality of $X_+$ one has  
	$X = \overline{X_+ - X_+} \subseteq \overline{X_x} \subseteq X$. 
	Therefore, $x$ is indeed a quasi-interior point of $X_+$.
\end{proof}

The following geometric characterisation of almost interior points is very useful. 
Let $C$ be a closed convex set in a real Banach space $X$. 
A vector $c \in C$ is called a \emph{support point} of $C$ if there exists a 
non-zero functional $x' \in X'$ such that $\langle x',c \rangle \ge \langle x',x\rangle$ 
for all $x \in C$, i.e., $x'$ attains its maximum on $C$ at the point $c$ (see \cite{Bishop1963}). 
Using this terminology, we can characterise those positive vectors in an ordered Banach space which are \emph{not} almost interior points 
of the positive cone:

\begin{proposition} \label{prop:almost-interior-points-via-support-functionals}
	Let $X$ be an ordered Banach space and let $x \in X_+$. The following assertions are equivalent:
	\begin{enumerate}[\upshape (i)]
		\item $x$ is not an almost interior point of $X_+$.
		\item $x$ is a support point of $X_+$.
	\end{enumerate}
\end{proposition}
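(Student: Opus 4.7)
The plan is to establish both implications directly from the definitions by playing with positive scalar multiples in the cone, without needing any separation theorem or additional structure.

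For the implication (ii) $\Rightarrow$ (i), I would assume $x$ is a support point, so there exists a non-zero $x' \in X'$ with $\langle x', x\rangle \ge \langle x', y\rangle$ for all $y \in X_+$. The key trick is to use that $X_+$ is a cone: for any $y \in X_+$ and any $\lambda > 0$ we have $\lambda y \in X_+$, hence $\langle x', x\rangle \ge \lambda \langle x', y\rangle$. Letting $\lambda \to \infty$ forces $\langle x', y\rangle \le 0$ for every $y \in X_+$, i.e.\ the functional $\varphi := -x'$ lies in $X'_+$ and is non-zero. Applying the support inequality with $y = 0$ gives $\langle x', x\rangle \ge 0$, while applying it with $y = x$ together with $\langle x',y\rangle \le 0$ gives $\langle x', x\rangle \le 0$; hence $\langle \varphi, x\rangle = 0$. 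Thus $\varphi$ is a non-zero positive functional vanishing on $x$, which by Definition~\ref{def:versions-of-interior-points}(ii) means $x$ is not an almost interior point.

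For the converse (i) $\Rightarrow$ (ii), I would assume $x$ is not almost interior, so there is a non-zero $\varphi \in X'_+$ with $\langle \varphi, x\rangle = 0$. Setting $x' := -\varphi$, which is non-zero, we get $\langle x', y\rangle = -\langle \varphi, y\rangle \le 0 = \langle x', x\rangle$ for every $y \in X_+$. Thus $x'$ attains its maximum on $X_+$ at $x$, exhibiting $x$ as a support point of $X_+$.

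There is essentially no obstacle here: the argument is a short duality computation. The only subtle point is the cone-scaling step in (ii) $\Rightarrow$ (i), which is the standard mechanism by which any supporting functional of a convex cone must in fact be non-positive on the entire cone and vanish at the support point; once this observation is made, the proof is immediate.
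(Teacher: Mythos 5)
Your proof is correct and follows essentially the same route as the paper: the implication (i) $\Rightarrow$ (ii) is the identical observation that $-\varphi$ supports $X_+$ at $x$, and your cone-scaling step in (ii) $\Rightarrow$ (i) (letting $\lambda \to \infty$) is the same argument the paper carries out with integer multiples $ny$ and division by $n$. No gaps.
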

\begin{proof}
	``(i) $\Rightarrow$ (ii)'' If $x$ is not an almost interior point of $X_+$, then we can find a non-zero positive 
	functional $x' \in X'$ such that $\langle x',x\rangle = 0$. Hence the functional $-x'$ satisfies  
	\begin{align*}
		 \langle -x',y\rangle \le 0 = \langle -x',x\rangle 
	\end{align*}
	for all $y \in X_+$. So, $x$ is a support point of $X_+$.
	
	``(ii) $\Rightarrow$ (i)'' If $x$ is a support point of $X_+$, then there exists a non-zero functional $x' \in X'$ 
such that $\langle x',x\rangle \ge \langle x',y\rangle$ for all $y \in X_+$. 
Note that this implies $\langle x',x\rangle \ge 0$ since $0 \in X_+$. 
Moreover, for any $y \in X_+$ and $n\in \bbN$ we also have $ny \in X_+$ and thus
	\begin{align*}
		\frac{1}{n} \langle x',x\rangle \ge \langle x',y\rangle,
	\end{align*}
	for all $n\in \bbN$. Hence, $0\geq\langle x',y \rangle$. As $x$ is itself an element of $X_+$, we conclude 
	that $0 \le \langle x',x\rangle \le 0$. So $\langle x',x\rangle = 0$. 
	Thus, $-x'$ is a positive non-zero functional which maps $x$ to $0$. So $x$ is not an almost interior point of $X_+$.
\end{proof}

Let $C$ be a closed convex set in a Banach space $X$ and suppose that $C$ has non-empty interior. 
Then it is not difficult to see that $C$ coincides with the closure of its interior ${\rm int}(C)$. 
Thus, it is an easy consequence of the Hahn--Banach separation theorem that a vector $x \in C$ is a support point of $C$ 
if and only if it is contained in the topological boundary of $C$. 
Thus, we  obtain the subsequent corollary as a consequence of Proposition~\ref{prop:almost-interior-points-via-support-functionals}. 
\begin{corollary} \label{cor:almost-interior-points-and-interior-points}
	Let $X$ be an ordered Banach space and assume that the positive cone $X_+$ has non-empty interior. 
	Then the following assertions are equivalent for each vector $x \in X_+$:
	\begin{enumerate}[\upshape (i)]
		\item $x$ is an almost interior point of $X_+$.
		\item $x$ is a quasi-interior point of $X_+$.
		\item $x$ is an interior point of $X_+$.
	\end{enumerate}
\end{corollary}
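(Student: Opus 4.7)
The plan is to prove the implications as a cycle $\text{(iii)} \Rightarrow \text{(ii)} \Rightarrow \text{(i)} \Rightarrow \text{(iii)}$, since (ii)$\Rightarrow$(i) is already handled in the paragraphs preceding Open Problem~\ref{openproblem:almost-vs-quasi-interior-points}.

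For $\text{(iii)} \Rightarrow \text{(ii)}$, I would exploit the fact that an interior point is automatically an order unit. If $x \in \mathrm{int}(X_+)$, pick $\varepsilon > 0$ with $B(x,\varepsilon) \subseteq X_+$. For any $y \in X$ with $\norm{y} < \varepsilon$, both $x+y$ and $x-y$ lie in $X_+$, which precisely says $-x \le y \le x$, so $y \in X_x$. Thus the linear subspace $X_x$ contains a neighbourhood of $0$ and must equal $X$; in particular, $X_x$ is dense, so $x$ is a quasi-interior point. (In fact, this argument shows that $x$ is an order unit.)

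The main step is $\text{(i)} \Rightarrow \text{(iii)}$, which is really the content of the corollary. The key is to combine Proposition~\ref{prop:almost-interior-points-via-support-functionals} with the geometric fact mentioned in the paragraph just before the corollary: for a closed convex set $C$ with non-empty interior, $C = \overline{\mathrm{int}(C)}$, and a standard Hahn--Banach/Eidelheit argument then shows that the support points of $C$ are exactly the points of the topological boundary $\partial C$. Applied to $C = X_+$, this gives: $x \in X_+$ is a support point of $X_+$ if and only if $x \in \partial X_+$. Combining this with the equivalence \textit{($x$ is not almost interior) $\Leftrightarrow$ ($x$ is a support point of $X_+$)} from Proposition~\ref{prop:almost-interior-points-via-support-functionals}, we conclude that $x$ is an almost interior point if and only if $x \in X_+ \setminus \partial X_+ = \mathrm{int}(X_+)$.

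The main obstacle, such as it is, is ensuring that the Hahn--Banach step goes through cleanly. Separating the disjoint convex sets $\mathrm{int}(X_+)$ (open) and $\{x\}$ (for $x \in \partial X_+$) by Theorem~\ref{separation theorem} yields a non-zero $x' \in X'$ and $\gamma \in \bbR$ with $\langle x', a\rangle < \gamma \le \langle x', x\rangle$ for all $a \in \mathrm{int}(X_+)$; density $X_+ = \overline{\mathrm{int}(X_+)}$ then upgrades this to $\langle x', y \rangle \le \langle x', x\rangle$ for all $y \in X_+$, showing $x$ is a support point. The converse, that interior points are never support points, is immediate: if $B(x,\varepsilon) \subseteq X_+$ and $x'\neq 0$, a small perturbation $x + \delta z \in X_+$ with $\langle x', z\rangle > 0$ strictly increases the value of $x'$. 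Everything else is bookkeeping.
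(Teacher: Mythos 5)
Your proposal is correct and follows essentially the same route as the paper: (iii)$\Rightarrow$(ii) via the order-unit/principal-ideal argument, (ii)$\Rightarrow$(i) by the remark after Definition~\ref{def:versions-of-interior-points}, and (i)$\Rightarrow$(iii) by combining Proposition~\ref{prop:almost-interior-points-via-support-functionals} with the identification of support points with boundary points for a closed convex set with non-empty interior. The only difference is that you spell out the Eidelheit separation argument that the paper merely cites as an easy consequence of Hahn--Banach in the paragraph preceding the corollary.
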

\begin{proof}
	``(iii) $\Rightarrow$ (ii)'' Let $y \in X$. Due to assertion~(iii) the positive cone contains an open neighbourhood of $x$, 
	so there exists a number $\delta > 0$  such that $x - \delta y$ and $x + \delta y$ are both contained in $X_+$. 
	This shows that $y \le x/\delta$ and $y \ge - x/\delta$, so $y$ is contained in the order 
                interval $[-\frac{1}{\delta}x, \frac{1}{\delta}x]$. 
	Thus it is proved that $X_x = X$; in particular, $x$ is a quasi-interior point of $X$.  
	
	``(ii) $\Rightarrow$ (i)'' As shown after Definition~\ref{def:versions-of-interior-points}, every quasi-interior point of $X_+$ 
	is also an almost interior point of $X_+$.
	
	``(i) $\Rightarrow$ (iii)''	If $x$ is not contained in ${\rm int}(X_+)$, then it is contained in the boundary of $X_+$. 
	Hence, by what has been said right before the corollary, $x$ is a support point of $X_+$ and therefore, 
	Proposition~\ref{prop:almost-interior-points-via-support-functionals} implies that $x$ is not an almost interior point of $X_+$.
\end{proof}

The positive cone of an ordered Banach space can contain an almost interior point only if the cone is total:

\begin{proposition} \label{prop:almost-interior-points-imply-total-cone}
	Let $X$ be an ordered Banach space and assume that there exists an almost interior point $x$ of $X_+$. 
	Then the positive cone $X_+$ is total.
\end{proposition}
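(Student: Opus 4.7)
The plan is to argue by contraposition: assuming that $X_+$ is not total, I would construct a non-zero positive functional that vanishes at $x$, contradicting the definition of an almost interior point.

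First, suppose $X_+$ is not total, so the subspace $Y := \overline{X_+ - X_+}$ is a proper closed subspace of $X$. Pick any $z \in X \setminus Y$. Since $Y$ is a closed linear subspace, the Hahn--Banach theorem (or directly Theorem~\ref{separation theorem} applied to a suitable open neighbourhood of $z$ disjoint from $Y$) yields a functional $x' \in X'$ with $x' \neq 0$ and $\langle x', y \rangle = 0$ for all $y \in Y$.

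Next I would observe that, because $X_+ \subseteq Y$, the functional $x'$ satisfies $\langle x', u\rangle = 0 \ge 0$ for every $u \in X_+$. Hence $x' \in X'_+$, and it is non-zero by construction. In particular, since the almost interior point $x$ lies in $X_+$, we have $\langle x', x \rangle = 0$. This directly contradicts the defining property of an almost interior point, namely that $\langle x', x \rangle > 0$ for every non-zero $x' \in X'_+$.

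There is essentially no obstacle: the argument is a one-line Hahn--Banach separation once the right subspace is identified. The only point to be careful about is to make sure the separating functional is both non-zero and positive; the former is guaranteed by the choice of $z \notin Y$, while the latter is automatic (indeed vacuous) because $x'$ vanishes on all of $X_+$. No normality, generating, or continuity hypothesis is needed.
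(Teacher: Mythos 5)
Your argument is correct and is essentially identical to the paper's own proof: both take the closed subspace $\overline{X_+ - X_+}$, use Hahn--Banach (Eidelheit) separation to produce a non-zero functional vanishing on it, observe that this functional is automatically positive because it vanishes on $X_+$, and conclude that no point of $X_+$ can be almost interior. No gap; the only cosmetic difference is that you phrase it as a contradiction with a fixed almost interior point $x$, while the paper phrases it as showing no positive vector is almost interior.
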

\begin{proof}
	Let $F$ denote the closure of $X_+ - X_+$ and assume that $F \not= X$. Since $F$ is a closed vector subspace of $X$ which is not equal to $X$,  
	by means of the separation Theorem  \ref{separation theorem}, there exists a non-zero functional $x' \in X'$ which vanishes on $F$; 
	in particular, $x'$ vanishes on $X_+$. Hence, $x'$ is a non-zero positive functional, but $\langle x', x\rangle = 0$ for each $x \in X_+$. 
	This shows that no vector $x \in X_+$ is an almost interior point of $X_+$.
\end{proof}

A deep result of Bishop and Phelps \cite[Theorem~1]{Bishop1963} says that, if $C$ is a closed convex set in a real Banach space $X$, 
then its support points are dense in its boundary. This is not difficult to show in case that $C$ has non-empty interior (in fact, 
in this case the set of support points of $C$ coincides with the boundary of $C$), but it is also true for sets with empty interior. 
Moreover, we point out that mentioned theorem of Bishop and Phelps does not assume the set $C$ to be bounded (in contrast to a, presumably better known, 
result of the same authors which asserts that the so-called \emph{support functionals} of $C$ are dense in the dual space $X'$ in case that 
$C$ is bounded \cite[Corollary~4]{Bishop1963}). 
If we combine the result of Bishop and Phelps with Proposition~\ref{prop:almost-interior-points-via-support-functionals}, we obtain the 
following result on the existence of almost interior points and strictly positive functionals. This will be important in   
 the proof of our main result in Theorem \ref{thm:strong-convergence}.

\begin{theorem} \label{thm:existence-of-non-almost-interior-points}
	Let $X$ be an ordered Banach space of dimension at least $2$ and assume that $X_+ \not= \{0\}$. 
	Then there exist a non-zero point $x \in X_+$ which is not 
	an almost interior point of $X_+$ and a non-zero functional $x' \in X'_+$ which is not strictly  positive.
\end{theorem}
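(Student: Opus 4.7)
The strategy is to apply the Bishop--Phelps theorem recalled immediately before the statement to the closed convex set $X_+$, combined with Proposition~\ref{prop:almost-interior-points-via-support-functionals}, in order to produce the non-zero, non-almost-interior element $x$. The non-strictly-positive functional $x'$ then falls out of the definition of an almost interior point.

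First I would show that the topological boundary $\partial X_+$ of the cone contains a non-zero element. Since $X_+\cap(-X_+)=\{0\}$ and $\dim X\ge 2$, the cone $X_+$ is a proper closed subset of $X$, so $X\setminus X_+$ is non-empty and open. Moreover, $0\notin\operatorname{int}(X_+)$, for otherwise some open ball around $0$ would lie inside $X_+$, which contradicts $X_+\cap(-X_+)=\{0\}$ (any small non-zero $u$ and $-u$ would both be positive). If we had $\partial X_+=\{0\}$, then $X_+\setminus\{0\}=\operatorname{int}(X_+)$ would be open, and the two non-empty open sets $X_+\setminus\{0\}$ and $X\setminus X_+$ would disconnect $X\setminus\{0\}$. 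This contradicts the path-connectedness of $X\setminus\{0\}$, which is precisely what $\dim X\ge 2$ guarantees. Hence there exists $b\in\partial X_+$ with $b\ne 0$.

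Next I would invoke the Bishop--Phelps theorem for the closed convex set $X_+$: its support points are norm-dense in $\partial X_+$. Picking a support point $x$ of $X_+$ with $\norm{x-b}<\norm{b}/2$ forces $\norm{x}>\norm{b}/2>0$, so $x\ne 0$; and $x\in X_+$ because $\partial X_+\subseteq X_+$. Proposition~\ref{prop:almost-interior-points-via-support-functionals} then tells us that $x$ is not an almost interior point of $X_+$, which is exactly the first assertion. Finally, since $x\in X_+$ is non-zero but not almost interior, the very definition of an almost interior point furnishes a non-zero functional $x'\in X'_+$ with $\langle x',x\rangle=0$. Because $x>0$, this exhibits $x'$ as a non-zero positive functional that is not strictly positive, completing the proof.

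The main obstacle is the first step, namely ruling out the degenerate possibility $\partial X_+=\{0\}$; this is the only place where the hypothesis $\dim X\ge 2$ is actually used, via the connectedness of the punctured space $X\setminus\{0\}$. Everything else is a direct combination of Bishop--Phelps with the characterisation of almost interior points through support functionals.
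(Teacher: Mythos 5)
Your proof is correct and follows essentially the same route as the paper: the Bishop--Phelps theorem applied to $X_+$, combined with Proposition~\ref{prop:almost-interior-points-via-support-functionals}, yields a non-zero support point which is not almost interior, and the witnessing functional is then not strictly positive. The only difference is that you spell out, via connectedness of $X\setminus\{0\}$, why the boundary of $X_+$ contains a non-zero point --- a step the paper merely asserts.
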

\begin{proof}
	By the result of Bishop and Phelps quoted right before this theorem, the set of support points of $X_+$ is dense in the boundary of $X_+$. 
	As $\dim X \ge 2$ and as the cone $X_+$ is neither equal to $\{0\}$ nor to $X$, it follows that $X_+$ has a non-zero boundary point $y$. 
	Hence, due to the density of the support points of $X_+$ in the boundary of $X_+$, there exists a support point $x$ 
        of $X_+$ with $\norm{x-y}  \leq \frac{1}{2}\norm{y}$, so $x \not= 0$. 
	According to Proposition~\ref{prop:almost-interior-points-via-support-functionals}, $x$ is not an almost interior point of $X_+$. 
        \par
	On the other hand, if every non-zero positive functional in $X'$ was strictly positive, then every non-zero vector in $X_+$ would 
	be an almost interior point of $X_+$. Hence, there exists a non-zero positive functional in $X'$ which is not strictly positive.
\end{proof}

The result in Theorem~\ref{thm:existence-of-non-almost-interior-points} has a somewhat curious history:
in \cite[Remark~(iii) on p.\,270]{Schaefer1971} Schaefer discusses the relation of the assertion of 
Theorem~\ref{thm:existence-of-non-almost-interior-points} to a spectral theoretic question; the discussion implies 
that Schaefer considered the question whether the assertion of Theorem~\ref{thm:existence-of-non-almost-interior-points} is true or not to be open. 
A few years later, Abdelaziz used the argument presented in the proof of Theorem~\ref{thm:existence-of-non-almost-interior-points} above 
in the paper \cite{Abdelaziz1975}. 
Yet, he did not seem to be aware of the open problem of Schaefer, which was solved by his argument and moreover, 
the assertion of Theorem~\ref{thm:existence-of-non-almost-interior-points} is not easy to find in \cite{Abdelaziz1975} for two reasons: 
(i) the result is only contained implicitly in the  proof of \cite[Theorem~4]{Abdelaziz1975} and 
(ii) this theorem is only stated for the special case of ordered Banach spaces which have a normal and generating cone and which admit, in addition, 
the Riesz-decomposition property -- however, 
the very part of the proof of \cite[Theorem~4]{Abdelaziz1975} which we are referring to works perfectly fine for arbitrary ordered Banach spaces. 
The first explicit statements of Theorem~2.10 in the literature which we are aware of are much younger and due to Katsikis and Polyrakis \cite[Proposition 7]{KP06} and Alekhno \cite[Theorem 6]{Alekhno2013}. Katsikis and Polyrakis state the theorem only for quasi-interior points, but their proof -- which also uses the Bishop--Phelps theorem -- works for almost interior points, too. Alekhno states the theorem for almost interior points (but he uses a different terminology and calls them quasi-interior). He refers to the above mentioned open problem of Schaefer and to the paper of Katsikis and Polyrakis, but apparently was not aware of Abdelaziz' argument.
Alekhno's proof employs the so-called \emph{drop theorem} which originally goes back to Danes \cite{Danes1972} 
(see also \cite[Corollary~7]{Brezis1976} 
for a rather abstract approach to this result) and which can be seen as a non-linear version of the theorem of Bishop and Phelps that 
we used in the above proof. 

Before we conclude this section with a few examples, we recall a further characterisation of interior points of a positive 
cone which is, in contrast to Corollary~\ref{cor:almost-interior-points-and-interior-points},  applicable even without the assumption 
that the cone has non-empty interior. 
This characterisation is often helpful to identify interior elements of the cone (if any of them exist) in applications.
\par 
\begin{proposition} \label{prop:characterisation-of-order-units}
	Let $X$ be an ordered Banach space and let $u \in X$. The following assertions are equivalent:
	\begin{enumerate}[\upshape (i)]
		\item The vector $u$ is an order unit of $X$.
		\item The positive cone $X_+$ is generating and for every $x \in X_+$ there exists a real number $\varepsilon > 0$ 
                      such that $u \ge \varepsilon x$.
                \item The vector $u$ is positive and the principal ideal $X_u$ is equal to $X$. 		
                \item There exists a real number $\varepsilon > 0$ such that $u \ge x$ for all vectors $x \in X$ of norm $\norm{x} \le \varepsilon$.
		\item The vector $u$ is an interior point of $X_+$.
	\end{enumerate}
\end{proposition}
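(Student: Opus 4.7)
The plan is to establish all the equivalences via the cycle (v) $\Rightarrow$ (i) $\Rightarrow$ (iv) $\Rightarrow$ (v), supplemented by the equivalence (i) $\Leftrightarrow$ (iii) (which is essentially a reformulation) and the short detour (i) $\Rightarrow$ (ii) $\Rightarrow$ (iii). Everything except one step consists of definitional manipulations; the heart of the matter is the implication (i) $\Rightarrow$ (iv), which I will derive from the Baire category theorem.

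I would begin with the trivial (i) $\Leftrightarrow$ (iii). Given (i), the positivity $u \ge 0$ follows by applying the order unit property to $y = -u$ (yielding $(1+\varepsilon)u \ge 0$), and applying the definition to both $y$ and $-y$ shows $\pm y \le \lambda u$ for some $\lambda > 0$, i.e.\ $y \in X_u$. The converse is immediate from $X_u = X$ by dividing by $\lambda$. For (i) $\Rightarrow$ (ii), the second clause is a special case of the order unit property, while the generating property follows from the decomposition $y = \tfrac{1}{\varepsilon}(u - (u - \varepsilon y))$ with both summands in $X_+$. For (ii) $\Rightarrow$ (iii), I would first deduce $u \ge 0$: using generation to write $u = u_+ - u_-$ with $u_\pm \in X_+$ and applying the hypothesis of (ii) to $u_-$ yields $u \ge \varepsilon u_-$, which rearranges to $u_+ \ge (1+\varepsilon) u_-$ and then $u \ge \varepsilon u_- \ge 0$. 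For arbitrary $y \in X$, decomposing $y = y_+ - y_-$ and bounding $y_\pm$ by multiples of $u$ yields $\pm y \le \lambda u$, so $y \in X_u$.

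The main step is (i) $\Rightarrow$ (iv). I would consider the family of order intervals $A_n := [-nu, nu]$ for $n \in \bbN$. Each $A_n$ is closed (since $X_+$ is closed), convex, and symmetric under $x \mapsto -x$. Together with (i) (via its equivalence with (iii)) this shows $\bigcup_n A_n = X$, so the Baire category theorem provides some $n$ and an open ball $B(x_0, \delta) \subseteq A_n$. Symmetry gives $B(-x_0, \delta) \subseteq A_n$, and the convex combination $z = \tfrac{1}{2}(z + x_0) + \tfrac{1}{2}(z - x_0)$ then forces $B(0, \delta) \subseteq A_n$. Thus $u \ge x/n$ whenever $\norm{x} < \delta$, which (by closedness of $X_+$) yields (iv) with $\varepsilon := \delta/n$.

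The remaining implications are short. For (iv) $\Rightarrow$ (v), the condition $u \ge x$ for $\norm{x} \le \varepsilon$ rewrites as $u + B(0, \varepsilon) \subseteq X_+$, so $u$ is an interior point of $X_+$. For (v) $\Rightarrow$ (i), if $B(u, \delta) \subseteq X_+$ then for every $0 \neq y \in X$ the vector $u - \tfrac{\delta}{2\norm{y}} y$ is positive, giving $u \ge \tfrac{\delta}{2\norm{y}} y$, and the case $y = 0$ is covered by $u \ge 0$. The main obstacle is, as indicated, the Baire step in (i) $\Rightarrow$ (iv): this is the only place where Banach space completeness, rather than purely order-theoretic manipulation, is essential.
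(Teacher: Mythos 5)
Your proof is correct, but the crucial implication is handled quite differently from the paper. You prove (i) $\Rightarrow$ (iv) directly by a Baire category argument on the closed, convex, symmetric order intervals $[-nu,nu]$, which cover $X$ because $X_u=X$; one of them then has interior, and symmetry plus convexity pulls the interior point back to the origin. The paper instead proves (ii) $\Rightarrow$ (iv) in two stages: first a contradiction argument with the absolutely convergent series $x=\sum_n n x_n$ shows that $u$ dominates all \emph{positive} vectors of sufficiently small norm, and then the cited fact that a closed generating cone is non-flatted (every $x$ decomposes as $x_1-x_2$ with $\norm{x_i}\le M\norm{x}$, $x_i\ge 0$) transfers this to arbitrary small vectors. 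Both routes ultimately rest on completeness of $X$ (the non-flattedness theorem is itself an open-mapping/Baire-type result), but yours is self-contained, whereas the paper outsources the completeness input to a quoted theorem and, in exchange, establishes the slightly finer intermediate fact that domination of small positive vectors already suffices once the cone is generating. The remaining implications in your argument are routine and correct; your cycle (v) $\Rightarrow$ (i) $\Rightarrow$ (iv) $\Rightarrow$ (v) together with (i) $\Leftrightarrow$ (iii) and (i) $\Rightarrow$ (ii) $\Rightarrow$ (iii) does close up all five assertions.
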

\begin{proof} We show ``(i) $\Rightarrow$ (ii) 
$\Rightarrow$ (iv) $\Rightarrow$ (v) $\Rightarrow$ (iv) $\Rightarrow$ 
(i)'' and ``(ii) $\Rightarrow$ (iii) $\Rightarrow$ (i)''.

	``(i) $\Rightarrow$ (ii)'' Let $u$ be an order unit. Then the second part of assertion~(ii) is obviously true. 
In order to prove that $X_+$ is generating, let $x \in X$. 
Since $u$ is an order unit we have $\frac{1}{\varepsilon} u \ge x$ for some $\varepsilon > 0$. 
As $x = \frac{1}{\varepsilon} u  - (\frac{1}{\varepsilon}u - x)$, the vector $x$ is the difference of two positive vectors.
	
	``(ii) $\Rightarrow$ (iv)'' We first show that there exists a real number $\hat \varepsilon > 0$ such that $u \ge x$ 
for all $x \in X_+$ of norm at most $\hat \varepsilon$. 
         Assume the contrary. For each integer $n \in \bbN$ we can then find a vector $x_n \in X_+$ which has norm at 
         most $\frac{1}{n^3}$ and satisfies $u \not\ge x_n$. 
        Define
	\begin{align*}
		x := \sum_{n \in \bbN} nx_n,
	\end{align*}
	where the series converges absolutely in $X$. Note that the vector $x$ is positive and that $\frac{1}{n}x$ 
        dominates $x_n$ for each index $n$. Hence, $u \not\ge \frac{1}{n} x$ for any $n \in \bbN$. 
        This contradicts~(ii). 
        Thus, there exists a real number $\hat \varepsilon > 0$ with the property claimed above.\\
Now we use that a closed generating cone in an ordered Banach space is always \emph{non-flatted}, i.e.  
we find a number $M > 0$ such that each vector $x \in X$ can be 
	written as $x = x_1 - x_2$ for two positive vectors $x_1,x_2$ of  norm at  most $M\norm{x}$ 
(see for instance \cite[Theorem~2.37]{Aliprantis2007} or \cite[Proposition~1.1.2]{Batty1984}).  
	We set $\varepsilon := \frac{\hat \varepsilon}{M}$. 
	Let $x \in X$ be a vector of norm $\norm{x} \le \varepsilon$. 
We decompose $x$ as $x = x_1 - x_2$,  where $x_1$ and $x_2$ are positive vectors whose norms satisfy  
        $\norm{x_1}, \norm{x_2} \le M \norm{x} \le M \varepsilon = \hat \varepsilon$. 
Then $x \le x_1 \le u$ by what we have shown above.
	
	``(iv) $\Rightarrow$ (v)'' Let $\varepsilon > 0$ be as in~(iv) and let $y \in X$ be a vector which satisfies 
$\norm{u-y} \le \varepsilon$. 
Then $u - y \le u$ by assertion~(iv), so $y \ge 0$. 
This shows that the (closed) ball with center $u$ and radius $\varepsilon$ is contained in $X_+$, so $u$ is an interior 
point of $X_+$.
	
	``(v) $\Rightarrow$ (iv)'' If $u$ is an interior point of $X_+$, then there exists a number $\varepsilon > 0$ such 
that the closed ball with center $u$ and radius $\varepsilon$ is contained in $X_+$. 
Now, let $x \in X$ be a vector of norm at most $\varepsilon$. 
          Then $\norm{u - (u-x)} = \norm{x} \le \varepsilon$ shows that $u-x$ lies in the closed ball centered at 
$u$ with radius $\varepsilon$, so $u-x$ is positive and thus, $x \le u$.
	
	``(iv) $\Rightarrow$ (i)'' This implication is obvious.

        ``(ii) $\Rightarrow$ (iii)'' Clearly, (ii) implies that $u \ge 0$ and that $X_+ \subseteq X_u$. As the 
positive cone is generating, we conclude that even $X \subseteq X_u$. 

        ``(iii) $\Rightarrow$ (i)'' This implication is obvious.
\end{proof}

\subsection{Examples}

In this subsection we give a few examples to illustrate the concepts discussed above.

\begin{example}[$L^p$-spaces] \label{ex:Lp-spaces} 
	Let $p \in [1,\infty)$ and let $(\Omega,\mu)$ be a $\sigma$-finite measure space; we endow the Banach space $L^p(\Omega,\mu)$ with its canonical order. 
	Then a vector $f \in L^p(\Omega,\mu)$ is an almost interior point of $L^p(\Omega,\mu)_+$ if and only if $f$ is a quasi-interior point 
	of $L^p(\Omega,\mu)_+$ (as $L^p(\Omega,\mu)$ is a Banach lattice) and so, if and only if $f(\omega) > 0$ for almost all $\omega \in \Omega$. 
	The positive cone in $L^p(\Omega,\mu)$ is generating, but it has empty interior unless $L^p(\Omega,\mu)$ is finite dimensional.
				
		On the other hand, the positive cone in $L^\infty(\Omega,\mu)$ always has non-empty interior. 
		A vector $f \in L^\infty(\Omega,\mu)$ is an interior point of the positive cone (equivalently: an almost or quasi-interior point, see Corollary~\ref{cor:almost-interior-points-and-interior-points}) if and only if $f(\omega) \ge \varepsilon$ for a number $\varepsilon > 0$ and for almost all $\omega \in \Omega$.
\end{example}

\begin{example}[Spaces without almost interior points] \leavevmode
	\begin{enumerate} 
	[\upshape (a)] 	
\item Let $T$ be  an uncountable set and let $p \in [1,\infty)$. If we endow the 
        space $\ell^p(T)$ with its pointwise order 
		(which renders it a Banach lattice), then it is not difficult to see that the positive cone does not contain any almost 
		interior points at all (see for instance \cite[Section~II.8]{Wulich2017})
\item A slightly more exotic example of a Banach lattice without almost interior 
        points is the following: let $C_b(\bbR)$ denote the space 
		of all bounded and continuous real-valued functions on $\bbR$ and endow the space
		\begin{align*}
			X := C_b(\bbR) \cap L^2(\bbR)
		\end{align*}
		with the pointwise order and the norm given by $\norm{f} := \norm{f}_\infty + \norm{f}_2$ for all $f \in X$, 
		where $\norm{f}_\infty$ denotes the {\rm sup}-norm in $C_b(\bbR)$. Then $X$ is a Banach lattice.
		
		We show that the positive cone $X_+$ does not contain any almost interior point. Fix $f \in X_+$. 
		Then there exists a sequence $(\omega_n)_{n \in \bbN}$ in $\bbR$ such that $\lvert\omega_n\rvert \to \infty$ and $f(\omega_n) \to 0$ as $n \to \infty$. 
		Moreover, we can find a function $g \in X$ which takes the value $1$ at each point $\omega_n$.
		
		Now, let $\calU$ be a free ultra-filter on $\bbN$ and define a functional 
		$\varphi \in X'$ by $\langle \varphi, h\rangle := \lim_{n \to \calU} h(\omega_n)$ 
		for each $h \in X$. Then $\varphi$ is a positive functional, and it is not zero since it maps $g$ to the number $1$. 
		However, $\langle \varphi, f\rangle = 0$, which shows that $f$ is not an almost interior point of $X_+$.
	\end{enumerate}
\end{example}		

\begin{example}[Spaces of continuous functions] \label{ex:spaces-of-continuous-functions} 
	Let $Q$ be a locally compact Hausdorff space and let $C_0(Q)$ denote the space of all real-valued 
	continuous functions on $Q$ that 
	vanish at infinity (endowed with the supremum norm). A function $f \in C_0(Q)$ is an almost interior point 
	(equivalently: a quasi-interior point) 	of the positive cone if and only if $f(\omega) > 0$ for all $\omega \in Q$.
		
		The positive cone in $C_0(Q)$ is always generating (as $C_0(Q)$ is a Banach lattice); it has non-empty interior if and only if 
		$Q$ is compact, and in this case, a function $f$ is an (almost) interior point of the positive cone if and only 
		if $f(\omega) > 0$ for all $\omega \in Q$ if and only if $f(\omega) \ge \varepsilon$ for some $\varepsilon > 0$ and all 
		$\omega \in Q$.
\end{example}
The following example requires some more detailed knowledge on $C^*$-algebras and is addressed to readers who are interested in this special 
field. 
\begin{example}[$C^*$-algebras] \label{ex:c-star-algebras}
	Let $\calA$ be a non-zero $C^*$-algebra. 
	Recall that $\calA$ is called \emph{unital} if it contains an element 
	$\eins_\calA$ which is neutral with repect to multiplication. We will tacitly use  some facts of the spectral theory in 
	(unital and non-unital) $C^*$-algebras and refer to the standard literature (see for instance \cite{Murphy1990}) 
	for more information.
	
	The self-adjoint part $\calA_\sa:=\{a\in \calA\colon a^*=a\}$ is a real Banach space, and it becomes an ordered Banach space if we 
	endow it with its usual cone $\calA_\sa^+ := \{a \in \calA_\sa: \, \sigma(a) \subseteq [0,\infty)\}$, where $\sigma(a)$ denotes 
	the \emph{spectrum} of $a$.  
	The ordered Banach space $(\calA_\sa, \calA_\sa^+)$ is not lattice ordered unless $\calA$ is commutative (this is a classical result of 
	Sherman \cite[Theorems~1 and~2]{Sherman1951}, see \cite{GlueckCStar} for a new approach to this result based on the spectral theory of 
	positive operators). 
	\par
	Now we describe the almost interior and interior points of $\calA_\sa^+$:
	\begin{enumerate}
		\item[(i)] \textit{An element $a \in \calA_\sa^+$ is an almost interior point of $\calA_\sa^+$ if and only if it is a quasi-interior point of $\calA_\sa^+$ if and only if the set $a\calA a$ is dense in $\calA$.}
	\begin{proof} The 
fact that $a$ is an almost interior point if and only if $a\calA a$ is 
dense in $\calA$ follows from the theory of hereditary $C^*$-algebras; 
see \cite[Exercise~5(c) on p.\,108 and  Remark~5.3.1]{Murphy1990} for details 
and see for instance 
\cite[Section~3.2]{Murphy1990} for more information on hereditary $C^*$-algebras. 
It remains to show that every almost interior point is 
quasi-interior\footnote{\, We were kindly informed about this result,
 as well as its proof, by one of the referees.}, so let $a$ be an 
almost interior point of $\calA_\sa^+$. For each $n \in \bbN$ define two
 continuous and bounded functions $g_n,f_n: [0,\infty) \to [0,\infty)$ by \begin{align*}
  g_n(t) = \min\{nt^{1/2}, t^{-1/2}\} 
\quad \text{and} \quad f_n(t) = t^{1/2}g_n(t) = \min\{nt, 1\} 
 \end{align*} 
for $t \in [0,\infty)$. Note that all functions $f_n$ and $g_n$ vanish at 
$0$, so $f_n(a)$ and $g_n(a)$ are elements of $\calA$, even if $\calA$ is not 
unital. For $t> 0$ the values $f_n(t)$ increase to $1$ as $n \to 
\infty$, so it follows from \cite[proof of 
Proposition~3.10.5]{Pedersen1979} that $(u_n) := (f_n(a))$ is an 
approximate unit in $\calA$. For each $x \in \calA_\sa$ this implies 
that $u_n x u_n \to x$ as $n \to \infty$ (since $u_n x u_n - x = (u_n x -
 x)u_n +x u_n - x$ and since $(u_n)$ is norm bounded). On the other 
hand, each element $u_n x u_n$ is contained in the principal ideal 
generated by $a$ within $\calA_\sa$. Indeed, we have 
\begin{align*} 
 u_n x u_n = a^{1/2} g_n(a) x g_n(a) 
a^{1/2} \le \norm{g_n(a) x g_n(a)} a \le \norm{g_n}_\infty^2 \norm{x} a 
\end{align*} 
and, by replacing $x$ with $-x$, also $u_n x u_n \ge - \norm{g_n}_\infty^2 
\norm{x} a$. This proves that $a$ is a quasi-interior point of 
$\calA_\sa^+$. 
\end{proof}
\item[(ii)] \textit{An element $a \in \calA_\sa^+$ is an interior point of $\calA_\sa^+$ if and only if $0$ is not contained in the spectrum of $a$.  
In particular, $\calA_\sa^+$ has interior points if and only if $\calA$ is unital.}
\begin{proof}
Let $a \in \calA_\sa^+$. If $\sigma(a)$ does not contain the number $0$, then 
$\calA$ is unital and we can thus use the Neumann series to prove that $a$ is an interior point of $\calA_\sa^+$. 

Assume conversely that $a$ is an interior point of $\calA_\sa^+$. We first 
prove that $\calA$ is unital. As is well-known, the norm is additive on 
the positive cone of the dual space $(\calA_\sa)'$, so it follows that 
there exists a functional $\eins$ in the bidual of $\calA_\sa$ such that
 $\langle \eins, \tau \rangle = \norm{\tau}$ for each $0 \le \tau \in 
(\calA_\sa)'$. Since the unit ball of a Banach space is always 
weakly${}^*$-dense in the unit ball of the bidual space, it follows that
 $a$ is also an interior point of the cone in $(\calA_\sa)''$; hence, $a
 \ge \delta \eins$ for some $\delta > 0$ by 
Proposition~\ref{prop:characterisation-of-order-units}. 

Consequently, $\langle \tau, a \rangle \ge \delta$ for each state $\tau$ on $\calA$, which shows that $0$ is not in the weak${}^*$ closure of all states on 
$\calA$. As shown in \cite[p.\ 60]{Bratteli1979}, this implies that 
$\calA$ is unital. According to 
Proposition~\ref{prop:characterisation-of-order-units}, the element $a$ 
dominates a strictly positive multiple of $\eins_\calA$, so $0$ is not 
in the spectrum of $a$. 
\end{proof} 
\end{enumerate} 
\end{example} 

\begin{example}[The $C^*$-algebra of compact operators] \label{ex:c-star-algebra-of-compact-operators}  
	Now consider the following special case of Example~\ref{ex:c-star-algebras}: let $H$ be a complex Hilbert space and denote by 
	$\calK(H)$ the $C^*$-algebra of all compact operators on $H$. 
	Then $\calK(H)$ is not unital, so the positive cone in $\calK(H)_\sa^+$ has empty interior. 
	Moreover, an operator $A \in \calK(H)_\sa^+$ is an almost interior point of the positive cone if and only if $A$ has dense 
	range and also if and only if $A$ is injective; for the first equivalence we refer to \cite[Exercise~5(b) on p.\,108]{Murphy1990} 
	and the second equivalence follows, for instance, 
	from the spectral theorem for compact self-adjoint operators.
\end{example}

\begin{example}[Spaces of continuously differentiable functions] 
	Let $k \in \bbN_0$ and let $\Omega \subseteq \bbR^d$ be a non-empty, bounded and open subset and let $C^k(\overline{\Omega})$ 
	denote the space of all real-valued functions on the closure $\overline{\Omega}$ of $\Omega$ which are $k$-times differentiable 
	on $\Omega$ and whose partial derivatives up to order $k$ can be continuously extended to the closure $\overline{\Omega}$. 
	As usual, we endow the space $C^k(\overline{\Omega})$ with the norm given 
	by $\norm{f} := \sum_{\alpha} \norm{f^{(\alpha)}}_{\infty}$, where the sum runs over all multi-indices $\alpha \in \bbN^d$ of length at most $k$ and 
	where $f^{(\alpha)}$ denotes the $\alpha$-th derivative of $f$.
	
	Moreover, we endow $C^k(\overline{\Omega})$ with the cone $C^k_+(\overline{\Omega})$ consisting of all functions $f$ which satisfy $f(\omega) \ge 0$ 
	for all $\omega \in \overline{\Omega}$. Then $C^k(\overline{\Omega})$ is an ordered Banach space with a generating cone, but the cone is not normal 
	unless $k = 0$. Moreover, the cone has non-empty interior and this interior consists of all functions $f$ which satisfy $f(\omega) > 0$ for all 
	$\omega \in \overline{\Omega}$. According to Corollary \ref{cor:almost-interior-points-and-interior-points} the almost interior points 
	coincide with the interior points of $C^k_+(\overline{\Omega})$.
	
	Note that the space $C^k(\overline{\Omega})$ is not a vector lattice unless $k = 0$.
\end{example}

\begin{example}[Sobolev spaces] \label{ex:sobolev-spaces}   
	Let $k \in \bbN_0$, let $p \in [1,\infty)$ and let $W^{k,p}(\bbR^d)$ be the Sobolev space on $\bbR^d$ consisting of all real-valued functions 
	whose distributional derivatives of order at most $k$ are all contained in $L^p(\bbR^d)$ (endowed with the usual Sobolev norm). 
Moreover, endow this space with the cone $W^{k,p}_+(\bbR^d)$ consisting of all functions $f$ which satisfy $f(\omega) \ge 0$ for almost 
all $\omega \in \bbR^d$. 
Then $W^{k,p}(\bbR^d)$ is an ordered Banach space.
	
	The space $W^{k,p}(\bbR^d)$ has a normal cone if and only if $k = 0$; it is lattice ordered if and only if $k \in \{0,1\}$ 
	(the implication ``$\Rightarrow$'' can be shown as in \cite[Example~2.3(d)]{Arendt2009} for Sobolev spaces on bounded subsets 
	of $\bbR^d$). The authors of the recent paper \cite{Ponce2018} showed that the positive cone in $W^{k,p}(\bbR^d)$ is generating by establishing that the cone is even non-flatted.

	We do not know whether the space $W^{k,p}(\bbR^d)$ has almost interior or even quasi-interior points.
\end{example}
\begin{example} \label{ex:extension-of-linear-functionals}
        Almost interior points are related to the possibility of extending a positive functional defined on a subspace of an ordered 
        Banach space to the entire space. 
        We illustrate this by the following example which is taken from \cite{Bakhtin1968}. 
        
	Endow $\ell^2 := \ell^2(\bbN)$ with the usual coordinate-wise order and consider the positive elements 
	\begin{align*}
		x_0 & = (1, \tfrac{1}{2^2},\tfrac{1}{3},\tfrac{1}{4^2},\dots,\tfrac{1}{2n-1},\tfrac{1}{(2n)^2},\dots) \\ 
		\text{and} \qquad y_0 & = (1, \tfrac{1}{2}, \tfrac{1}{3^2},\tfrac{1}{4},\dots, \tfrac{1}{(2n-1)^2},\tfrac{1}{2n},\dots)
	\end{align*}
	in $\ell^2$. It is easy to see that $y_0-\lambda x_0\notin X_+$ for all $\lambda>0$. Let $Y$ denote the $2$-dimensional vector 
	subspace of $\ell^2$ spanned by $x_0$ and $y_0$ and set $Y_+=X_+\cap Y$. Then $(Y,Y_+)$ is an ordered Banach space and the 
	functional $y' \in Y'$ given by
	\begin{align*}
		\langle y', \alpha x_0 + \beta y_0 \rangle := \alpha
	\end{align*}
	for all $\alpha,\beta \in \bbR$ is positive since $y_0-\lambda x_0\notin X_+$ for all $\lambda>0$.
	
	The functional $y'$ cannot be extended to a positive linear functional $x' \in (\ell^2)'$ since such a functional $x'$ would have to 
	map the quasi-interior (and thus almost interior) point $y_0$ of $\ell^2_+$ to $0$.
\end{example}

The simple observation underlying the above example can be upgraded to a theorem which characterises whether a linear functional can be 
extended from a subspace to a positive functional on the entire space. 
As Example \ref{ex:extension-of-linear-functionals}, this result also goes back to Bakhtin \cite[Theorem~1.6]{Bakhtin1968} and \cite{BakhtinGont1968}; 
here we include a version of this result which is a bit more general, although it relies on a similar proof.

\begin{theorem} \label{thm:extending-functionals-via-almost-interior-points}
	Let $(X,X_+)$ be an ordered Banach space and let $Y \subseteq X$ be a vector subspace which contains at least one almost interior point $y_0$ of $X_+$. For every linear map $y': Y \to \bbR$ the following assertions are equivalent:
	\begin{enumerate}[\upshape (i)]
		\item There exists a positive functional $x' \in X' \setminus \{0\}$ which extends $y'$.
		\item The set $X_+ + \ker y'$ is not dense in $X$ and we have $\langle y',y_0\rangle > 0$.
		\item The set $X_+ + \ker y'$ is not dense in $X$ and we have $\langle y',y\rangle > 0$ for every vector $y \in Y$ which is an almost interior point of $X_+$.
	\end{enumerate}
	\end{theorem}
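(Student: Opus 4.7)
The implication (iii) $\Rightarrow$ (ii) is immediate, since $y_0$ itself is an almost interior point of $X_+$ contained in $Y$.

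For (i) $\Rightarrow$ (iii), suppose $x' \in X'_+ \setminus \{0\}$ extends $y'$. For any almost interior point $y \in Y$ of $X_+$, one has $\langle y',y\rangle = \langle x',y\rangle > 0$ by the very definition of an almost interior point. To see that $X_+ + \ker y'$ cannot be dense in $X$, note that $x'$ vanishes on $\ker y'$ (as it extends $y'$) and is non-negative on $X_+$, so $\langle x', \cdot \rangle \ge 0$ on $X_+ + \ker y'$. If this set were dense, continuity of $x'$ would force $\langle x',x\rangle \ge 0$ for every $x \in X$, and applying this to $\pm x$ yields $x' = 0$, a contradiction.

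The main work lies in (ii) $\Rightarrow$ (i), which I would handle via Eidelheit's separation theorem (Theorem~\ref{separation theorem}). Set $C := \overline{X_+ + \ker y'}$; as $X_+ + \ker y'$ is invariant under multiplication by positive scalars, so is $C$. By assumption $C \ne X$, so pick $x_1 \in X \setminus C$ and an open convex neighbourhood $A$ of $x_1$ disjoint from $C$. Eidelheit separation produces a non-zero $x' \in X'$ and $\gamma \in \bbR$ with $\langle x', a \rangle < \gamma \le \langle x', c \rangle$ for all $a \in A$ and $c \in C$. Since $0 \in C$ we get $\gamma \le 0$, and from the positive homogeneity of $C$ it then follows that $\langle x', c \rangle \ge 0$ for every $c \in C$: if there were $c_0 \in C$ with $\langle x', c_0 \rangle < 0$, then $n c_0 \in C$ would violate the bound for large $n$. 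Consequently $x' \ge 0$ on $X_+$, so $x' \in X'_+$; and since $\ker y'$ is a linear subspace contained in $C$, the functional $x'$ must actually vanish identically on $\ker y'$.

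It remains to rescale $x'$ to an extension of $y'$. The condition $\langle y', y_0 \rangle > 0$ in (ii) forces $y' \not\equiv 0$, hence $\ker y'$ is a hyperplane in $Y$ and the restriction $x'|_Y$ is a scalar multiple $\alpha y'$ for some $\alpha \in \bbR$. Because $y_0$ is an almost interior point of $X_+$ and $x' \in X'_+ \setminus \{0\}$, we have $\langle x', y_0 \rangle > 0$, which combined with $\langle y', y_0 \rangle > 0$ gives $\alpha > 0$. Then $\frac{1}{\alpha} x'$ is the desired positive non-zero extension of $y'$.

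The most delicate step is the separation argument: one must simultaneously extract from a single separating functional the three features $x' \ge 0$, $x'$ vanishes on $\ker y'$, and $x' \ne 0$. This is exactly where the cone structure of $X_+ + \ker y'$ (positive homogeneity plus the subspace $\ker y'$) is used, and where the hypothesis that $X_+ + \ker y'$ fails to be dense enters essentially.
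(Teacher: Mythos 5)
Your proposal is correct and follows essentially the same route as the paper: the easy implications are handled identically, and for (ii) $\Rightarrow$ (i) both arguments separate the closure of $X_+ + \ker y'$ from an exterior point, use positive homogeneity to upgrade the bound to nonnegativity on that set (hence positivity of the functional and its vanishing on $\ker y'$), and then rescale using $\langle y',y_0\rangle > 0$ and the fact that $y_0$ is an almost interior point; your hyperplane argument for the rescaling is just a reformulation of the paper's decomposition $Y = \ker y' \oplus \bbR y_0$. No gaps.
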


Before the proof a few remarks are in order. We stress that the vector subspace $Y$ is not assumed to be closed in $X$. 
Also note that we did not assume $y'$ to be bounded with respect to any norm, while the functional $x'$ in assertion~(i) is bounded. 
Moreover, $y'$ is not a priori assumed to be positive in the sense that it maps $X_+ \cap Y$ into $[0,\infty)$; nevertheless, 
assertion~(i) implies that $y$ has this property automatically if any of the equivalent assertions~(i)--(iii) holds.  

\begin{proof}[Proof of Theorem~\ref{thm:extending-functionals-via-almost-interior-points}]
	The implication ``(i) $\Rightarrow$ (iii)'' is straightforward to prove. Indeed, assume to the contrary that $X_+ + \ker y'$ is dense 
	in $X$. 
	Then for the positive extension $x'$ of $y'$ we have $\langle x',x\rangle\geq 0$ for all $x\in \overline{X_++\ker y'}=X$.  
	Therefore, $x'=0$, which is impossible. 
	For an arbitrary almost interior point $y\in Y$ one has $0<\langle x',y\rangle =\langle y',y\rangle$.
	
	The implication ``(iii) $\Rightarrow$ (ii)'' is obvious. 
	To prove ``(ii) $\Rightarrow$ (i)'' assume that (ii) is true. We first note that $Y = \ker y' \oplus \bbR y_0$ since $y'$ 
	does not vanish at $y_0$.
	
	Now, let $K$ denote the closure of $X_+ + \ker y'$. Then $K$ is a closed convex subset of $X$ different from $X$. 
	Hence, by the Hahn--Banach theorem there exists a non-zero functional $\tilde x' \in X'$ and a real number $\alpha$ such that 
	$\langle \tilde x',x\rangle \ge \alpha$ for all $x \in K$. 
	Since $K$ is invariant under multiplication by positive scalars, we obtain $\langle \tilde x', x \rangle \ge \frac{\alpha}{n}$ for 
	all $x \in K$ and all $n \in \bbN$, so actually $\langle \tilde x',x\rangle \ge 0$ for all $x \in K$.
	
	As $K$ contains $X_+$ we conclude that $\tilde x'$ is positive. Moreover, $\tilde x'$ vanishes on $\ker y'$, 
	for if $y \in \ker y'$ then $\pm y \in K$ and thus $\langle \tilde x',y\rangle = 0$. 
	
	We conclude the proof by setting $x':= \frac{\langle y',y_0\rangle}{\langle \tilde x',y_0\rangle} \tilde x'$. 
	Observe that the denominator in this term is not zero since $y_0$ is an almost interior point of $X_+$. 
	The functional $x' \in X'$ is positive, it vanishes on $\ker y'$ and it coincides with $y'$ at the point $y_0$. 
	Since $Y = \ker y' \oplus \bbR y_0$ this implies that the restriction of $x'$ to $Y$ coincides with $y'$.
\end{proof}

\subsection{Positive operators and almost interior points}
In this subsection we briefly discuss how almost interior points behave under the action of certain positive operators. 
Those kind of results will be very useful in the proof of our main result in 
Section~\ref{section:strong-convergence-of-positive-semigroups}. 

\begin{proposition} \label{prop:almost-interior-points-by-functional-condition}
	Let $X,Y$ be ordered Banach spaces such that $X_+$ contains an almost interior point. For every positive operator $T \in \calL(X;Y)$ the following assertions are equivalent:
	\begin{enumerate}[\upshape (i)]
		\item There is $x \in X_+$ such that $Tx$ is an almost interior point of $Y_+$. 
        \item $\ker T' \cap Y'_+ = \{0\}$.
        \item $T$ maps almost interior points of $X_+$ to almost interior points of $Y_+$.
		\end{enumerate}
\end{proposition}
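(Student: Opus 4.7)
The proof naturally falls into three short arcs, so I would prove the cyclic chain (iii) $\Rightarrow$ (i) $\Rightarrow$ (ii) $\Rightarrow$ (iii), all of which amount to routine applications of the defining property of almost interior points and the identity $\langle T'y',x\rangle = \langle y',Tx\rangle$.

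For (iii) $\Rightarrow$ (i), I would simply invoke the standing hypothesis that $X_+$ contains an almost interior point $u$; then $Tu$ is, by (iii), almost interior in $Y_+$, so taking $x := u$ proves (i). This is the only place where the assumption that $X_+$ possesses an almost interior point is genuinely used.

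For (i) $\Rightarrow$ (ii), let $y' \in \ker T' \cap Y'_+$ and let $x \in X_+$ be such that $Tx$ is an almost interior point of $Y_+$. Then
\begin{align*}
\langle y', Tx\rangle = \langle T'y', x\rangle = 0,
\end{align*}
and since $y' \ge 0$ and $Tx$ is almost interior, the definition in Definition~\ref{def:versions-of-interior-points}(ii) forces $y' = 0$.

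For (ii) $\Rightarrow$ (iii), let $u$ be an almost interior point of $X_+$ and let $y' \in Y'_+$ with $\langle y', Tu\rangle = 0$. Since $T$ is positive, its adjoint $T'$ maps $Y'_+$ into $X'_+$, so $T'y' \in X'_+$, and $\langle T'y', u \rangle = \langle y', Tu\rangle = 0$. Because $u$ is almost interior in $X_+$, this yields $T'y' = 0$; i.e.\ $y' \in \ker T' \cap Y'_+$, whence $y' = 0$ by (ii). Thus $Tu$ is an almost interior point of $Y_+$.

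There is no serious obstacle here: the whole argument is a sequence of one-line dualisations. The only point that might deserve a comment is that the standing hypothesis ``$X_+$ contains an almost interior point'' is used exclusively to make the implication (iii) $\Rightarrow$ (i) non-vacuous; the implications (i) $\Rightarrow$ (ii) and (ii) $\Rightarrow$ (iii) hold for any positive operator $T$.
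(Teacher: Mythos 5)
Your proof is correct and follows essentially the same route as the paper's: the cyclic chain with the standing hypothesis used only for (iii) $\Rightarrow$ (i), the one-line dualisation for (i) $\Rightarrow$ (ii), and the observation that $T'$ maps $Y'_+$ into $X'_+$ for (ii) $\Rightarrow$ (iii) (which the paper phrases directly rather than contrapositively, an immaterial difference). No gaps.
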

 \begin{proof}
 ``(iii) $\Rightarrow$ (i)'' This implication is clear. \\
 ``(i) $\Rightarrow$ (ii)'' For each $y' \in \ker T' \cap Y'_+$ we have 
       $\langle y',Tx\rangle = \langle T'y',x\rangle = 0$; since $Tx$ is an almost 
       interior point of $Y_+$, it follows that $y' = 0$. \\
 ``(ii) $\Rightarrow$ (iii)'' Let $x\in X_+$ be an almost interior point of $X_+$ and 
        let $y' \in Y'$ be a non-zero positive functional. 
According to~(ii) the functional $T'y'$ is 
not zero, so $\langle y', Tx\rangle = \langle T'y', 
x\rangle > 0$. This shows that $Tx$ is an 
almost interior point of $Y_+$. 
\end{proof}

Let $X$ be an ordered Banach space and let $P \in \calL(X)$ be a positive 
projection. Denote the range space of $P$ by $PX$, i.e. $PX:=\{Px\colon x\in X\}$. Then the cone $(PX)_+ := X_+ \cap PX$ 
coincides with the set $P(X_+)$ and the space $(PX, (PX)_+)$ is an ordered Banach space 
in its own right (whose order coincides with the order inherited from $(X,X_+)$).

\begin{corollary} \label{cor:almost-interior-points-by-dense-range}
	Let $X, Y$ be ordered Banach spaces and let $x\in X$ be an almost interior point of $X_+$. 

\begin{enumerate}[\upshape (a)]
\item If $T \in \calL(X,Y)$ is a positive operator with dense range, then $Tx$ is an 
almost interior point of $Y_+$.
\item If $P \in \calL(X)$ is a positive projection, then $Px$ is an almost interior 
point of the positive cone $(PX)_+$ of the ordered Banach space $PX$.
\end{enumerate}
\end{corollary}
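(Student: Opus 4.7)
The plan is to reduce both parts to Proposition~\ref{prop:almost-interior-points-by-functional-condition}. In each case the vector $x$ itself witnesses the standing hypothesis of that proposition (the existence of an almost interior point in $X_+$), so the only thing to check is condition~(ii) of the proposition, namely $\ker T' \cap Y'_+ = \{0\}$ for the operator under consideration.

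For part~(a), let $T \in \calL(X,Y)$ be positive with dense range. If $y' \in Y'$ satisfies $T'y' = 0$, then $\langle y', Tz\rangle = \langle T'y', z\rangle = 0$ for every $z \in X$, so $y'$ vanishes on $T(X)$; by density and continuity, $y' = 0$. In particular $\ker T' \cap Y'_+ = \{0\}$. The implication (ii)~$\Rightarrow$~(iii) of Proposition~\ref{prop:almost-interior-points-by-functional-condition} then says that $T$ sends almost interior points of $X_+$ to almost interior points of $Y_+$, so $Tx$ is such a point.

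For part~(b), I view $P$ as an operator from $X$ into the range space $PX$. Since $P$ is a bounded projection, $PX = \ker(\id - P)$ is closed in $X$ and hence a Banach space, and as already noted in the paragraph preceding the corollary, $(PX)_+ = X_+ \cap PX = P(X_+)$ turns $PX$ into an ordered Banach space. Regarded as a map $X \to PX$, the operator $P$ is surjective (since $P(Pz) = Pz$ for every $z \in X$) and positive with respect to the cones $X_+$ and $(PX)_+$. Part~(a), applied with $Y = PX$ and $T = P$, yields that $Px$ is an almost interior point of $(PX)_+$.

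The argument is essentially a clean reduction to the preceding proposition; the only non-trivial input is the standard identification of $\ker T'$ with the annihilator of $T(X)$, which turns density of the range directly into the functional condition~(ii), so I do not anticipate any genuine obstacle.
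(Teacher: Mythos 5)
Your argument is correct and follows exactly the paper's route: dense range gives injectivity of $T'$, hence condition~(ii) of Proposition~\ref{prop:almost-interior-points-by-functional-condition}, and part~(b) is obtained as a special case of part~(a) by viewing $P$ as a surjective positive operator onto $PX$. You simply spell out a few details (closedness of $PX$, surjectivity of $P:X\to PX$) that the paper leaves implicit.
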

\begin{proof}
 (a) Since $T$ has dense range, its dual operator $T'$ is injective, so the assertion follows from implication 
``(ii) $\Rightarrow$ (iii)'' in 
Proposition~\ref{prop:almost-interior-points-by-functional-condition}. 
\\ (b) This is a special case of~(a). 
\end{proof}

\begin{corollary} \label{cor:almost-interior-points-for-semigroups}
	Let $X$ be an ordered Banach space with a positive cone $X_+ \not= \{0\}$, let $J = \bbN_0$ or $J = [0,\infty)$ and 
	let $\calT = (T_t)_{t \in J}$ be a positive operator semigroup on $X$. 
	Let $x$ be a non-zero positive vector in $X$ and let $t_0$ be a non-zero time in $J$ such that $T_{t_0}x$ is an almost interior point of $X_+$. 
	\par
	Then each $T_s$, $s \in J$ maps almost interior points to almost interior points.  
	In particular, $T_tx$ is an almost interior point for every $t \ge t_0$
\end{corollary}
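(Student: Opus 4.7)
The plan is to verify condition~(ii) of Proposition~\ref{prop:almost-interior-points-by-functional-condition} for every $T_s$, $s \in J$, and then derive the ``in particular'' statement from the semigroup property applied to the almost interior point $T_{t_0}x$.

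First, from the hypothesis that $T_{t_0}x$ is an almost interior point, Proposition~\ref{prop:almost-interior-points-by-functional-condition} applied to $T_{t_0}$ yields $\ker T_{t_0}' \cap X'_+ = \{0\}$ and, in particular, that $T_{t_0}$ itself maps almost interior points to almost interior points. A short induction extends the kernel condition to powers: for each $n \in \bbN$, $\ker (T_{t_0}')^n \cap X'_+ = \{0\}$. Indeed, if $(T_{t_0}')^n y' = 0$ with $y' \in X'_+$, then $(T_{t_0}')^{n-1} y'$ still lies in $X'_+$ (since $T_{t_0}'$ is positive) and is annihilated by $T_{t_0}'$, hence equals zero by the base case; iterating gives $y' = 0$. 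Since $(T_{t_0}')^n = T_{nt_0}'$ by the semigroup property, this reads $\ker T_{nt_0}' \cap X'_+ = \{0\}$ for all $n \in \bbN$.

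For the main step, I would fix an arbitrary $s \in J$ and choose $n \in \bbN$ large enough that $nt_0 \ge s$, which is possible because $t_0$ is non-zero (and lies in $\bbN$ in the discrete case). The semigroup identity $T_{nt_0} = T_s T_{nt_0 - s}$ dualises to $T_{nt_0}' = T_{nt_0 - s}' T_s'$. If $y' \in \ker T_s' \cap X'_+$, then $T_{nt_0}' y' = T_{nt_0-s}'(T_s' y') = 0$, and the previous paragraph forces $y' = 0$. Hence $\ker T_s' \cap X'_+ = \{0\}$, and the implication (ii) $\Rightarrow$ (iii) in Proposition~\ref{prop:almost-interior-points-by-functional-condition} gives that $T_s$ maps almost interior points to almost interior points. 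The ``in particular'' statement then follows at once: for $t \in J$ with $t \ge t_0$, one writes $T_t x = T_{t-t_0}(T_{t_0}x)$ and notes that $T_{t_0}x$ is an almost interior point while $T_{t-t_0}$ has just been shown to preserve almost interior points.

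The only delicate point is the passage to the power $T_{nt_0}$ in the main step: a more direct argument via a factorisation $T_{t_0} = T_s T_{t_0 - s}$ only works when $s \le t_0$, and it is the power trick together with the elementary induction on $n$ that lets a single argument handle all $s \in J$ uniformly. This is the step I would double-check most carefully in a formal write-up, in particular making sure the order of composition in the adjoint identity $T_{nt_0}' = T_{nt_0-s}' T_s'$ is the one that makes $T_s' y' = 0$ propagate to $T_{nt_0}' y' = 0$.
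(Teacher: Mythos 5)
Your proof is correct. It rests on the same key ingredient as the paper's argument -- Proposition~\ref{prop:almost-interior-points-by-functional-condition} combined with the semigroup law -- but you route everything through the dual condition~(ii): an induction gives $\ker T_{nt_0}'\cap X_+'=\{0\}$ for all $n$, and the factorisation $T_{nt_0}'=T_{nt_0-s}'T_s'$ with $nt_0\ge s$ then forces $\ker T_s'\cap X_+'=\{0\}$ uniformly in $s$. The paper stays on the primal side and splits into two cases: for $s\le t_0$ the operator $T_s$ maps the positive vector $T_{t_0-s}x$ to the almost interior point $T_{t_0}x$, so condition~(i) of the proposition applies directly, and the case $s\ge t_0$ is then handled by writing $T_s$ as a composition of operators already known to preserve almost interior points. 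Your version is slightly longer but treats all $s\in J$ in one stroke; the step you flagged is fine, since $(AB)'=B'A'$ gives exactly the order $T_{nt_0}'=T_{nt_0-s}'T_s'$ needed to propagate $T_s'y'=0$ to $T_{nt_0}'y'=0$. If you want to shorten your write-up, note that the dual-kernel induction can be bypassed: once $T_{t_0}$ is known to preserve almost interior points, $T_{nt_0}x=(T_{t_0})^{n-1}(T_{t_0}x)$ is itself an almost interior point, and condition~(i) then applies directly to $T_s$ via the positive vector $T_{nt_0-s}x$.
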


 \begin{proof}
For $0 \le s \le t_0$ the operator $T_s$ 
maps $T_{t_0-s}x$ to the almost interior point $T_{t_0}x$; hence, $T_s$ 
maps almost interior points to almost interior points according to 
Proposition~\ref{prop:almost-interior-points-by-functional-condition}. 
The semigroup law implies that the same remains true for $s \ge t_0$, 
and it thus follows, again by the semigroup law, that $T_tx$ is an 
almost interior point for all $t \ge t_0$. 
 \end{proof}

Almost interior points play also an essential role in the spectral theory of positive operators. 
During the last decades this theory has undergone a considerable development. 
It would probably provide sufficient material and research problems for an article on its own, so we do not discuss it here in detail. 
Instead, for getting a first impression we refer the reader to the classical puplications \cite{Krasnoselskii1989, Stecenko1966, ZabrKrasStec1967}.

\section{Operator Semigroups and the Jacobs--de Leeuw--Glicksberg decomposition} \label{section:jdlg}

In this section we recall a version of the famous Jacobs--de Leeuw--Glicksberg decomposition of operator semigroups. 
This result can be found in various different versions in the literature and is very helpful if one wants to prove convergence results 
for semigroups under compactness assumptions on its orbits. 
In Theorem~\ref{thm:jdlg}  we show a version of the theorem which is particularly well-adapted 
for our application in Section~\ref{section:strong-convergence-of-positive-semigroups}. 
 We use the following concept which is taken from \cite[p.\,2636]{Emelyanov2001}:

 \begin{definition}
Let $J = \bbN_0$ or $J = [0,\infty)$. 
An operator semigroup $\calT = (T_t)_{t \in J}$ is called \emph{strongly
 asymptotically compact} if for each $x
 \in X$ and each sequence $(t_n)_{n \in \bbN} \subseteq J$ that 
converges to $\infty$, the sequence $(T_{t_n}x)_{n \in \bbN}$ in $X$ has
 a convergent subsequence.
\end{definition}
If for a semigroup $\calT = (T_t)_{t \in J}$ the orbit $\{T_t x: 
\, t \in J\}$ is relatively compact in $X$ for each $x\in X$, then the semigroup is 
strongly asymptotically compact. Moreover, if $J = \bbN_0$ or if $J = 
[0,\infty)$ and $\calT$ is a $C_0$-semigroup, then it is easy to see 
that relative compactness of all orbits is equivalent to strong 
asymptotic compactness of the semigroup. However, if $J = [0,\infty)$ 
and the semigroup is not strongly continuous, then strong asymptotic 
compactness is a more general notion than relative compactness of the 
orbits (since strong asymptotic compactness allows the semigroup to 
behave quite wildly for small times). For characterizations of strong 
asymptotic compactness we refer to \cite[Proposition~2.5]{Glueck2019}. 
In the same paper, it is also explained why this property is closely 
related to the long term behaviour of operator semigroups. In the 
present article, we need the following  result which is our adapted version of the Jacobs--de Leeuw--Glicksberg Theorem and turns out to be a consequence of 
\cite[Theorem~2.2]{Glueck2019}.
\begin{theorem} \label{thm:jdlg}
Let $J = \bbN_0$ or $J = [0,\infty)$ 
and let $\calT = (T_t)_{t \in J}$ be a bounded and positive operator 
semigroup on an ordered Banach space $X$. Assume that $\calT$ is 
strongly asymptotically compact.

Then there exist a positive projection
 $P \in \calL(X)$ and a subgroup $\calG$ of the invertible operators in 
$\calL(P X)$ with the following properties:
\begin{enumerate}[\upshape (a)] 
\item The projection $P$ commutes with the operator $T_t$ for each $t \in J$, 
      i.e.\ both the range and the kernel of $P$ are left invariant by the semigroup $\calT$.
\item For every $x \in \ker P$ the vector $T_tx$ converges to $0$ with respect to 
      the norm on $X$ as $t \to \infty$.
\item The group $\calG$ is strongly compact and contains only positive operators.
\item The restriction of each operator $T_t$, $t \in J$, to $P X$ is 
      contained in $\calG$.
\end{enumerate}
\end{theorem}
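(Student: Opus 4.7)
The strategy is to invoke the abstract Jacobs--de Leeuw--Glicksberg theorem provided by \cite[Theorem~2.2]{Glueck2019}, which already handles the bulk of the construction, and then to verify that the positivity assumption on the semigroup $\calT$ propagates to $P$ and to every element of $\calG$.

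First, I would apply \cite[Theorem~2.2]{Glueck2019} to $\calT$, regarded for the moment just as a bounded, strongly asymptotically compact semigroup on the Banach space $X$. This yields a bounded projection $P \in \calL(X)$ commuting with each $T_t$ (so that both $PX$ and $\ker P$ are $\calT$-invariant), with the property that $T_t x \to 0$ for every $x \in \ker P$, together with a subgroup $\calG$ of the invertible operators in $\calL(PX)$ which is strongly compact and contains the restriction $T_t|_{PX}$ for every $t \in J$. This immediately gives properties (a), (b), (d) and the strong-compactness part of (c); only the positivity claims in (c) remain.

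It remains to show that $P$ and every element of $\calG$ are positive. Both facts rest on the same observation: the construction in \cite{Glueck2019} realises $P$ and each element of $\calG$ as limits, in the strong operator topology of $\calL(X)$ respectively $\calL(PX)$, of suitable nets of the operators $T_{t_\alpha}$ (when $\calG$ is identified with operators on $PX$ via $S \mapsto SP$). Since each $T_{t_\alpha}$ is positive and since the positive cone $X_+$ is norm closed in $X$, the set of positive operators is closed in the strong operator topology. Taking limits along the relevant nets shows that $P$ is positive as an element of $\calL(X)$ and that every element of $\calG$ is positive as an operator on the ordered Banach space $(PX, X_+ \cap PX)$.

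The main conceptual work is thus absorbed into the cited reference; the present theorem is essentially a repackaging of it under the extra standing hypothesis of positivity. If one were proving everything from scratch, the genuine obstacle would be the construction of $P$ and $\calG$, which proceeds by forming the closure of $\{T_t : t \in J\}$ in the strong operator topology, invoking the structure theory of compact right-topological semigroups (existence of a minimal ideal, the Ellis--Namioka theorem identifying it as a group with identity $P$), and verifying that strong asymptotic compactness is exactly what makes this closure strongly compact. Here, all of this is imported from \cite[Theorem~2.2]{Glueck2019}, and the only additional ingredient needed is the elementary closure argument for the positive cone recalled above.
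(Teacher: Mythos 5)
Your proposal is correct and follows essentially the same route as the paper: both import the construction of $P$ and $\calG$ from \cite[Theorem~2.2]{Glueck2019} (the paper first checks, via \cite[Proposition~2.5]{Glueck2019}, that strong asymptotic compactness makes the semigroup at infinity $\calT_\infty$ non-empty and strongly compact) and then obtain the positivity claims in (c) exactly as you do, from the fact that $P$ and the elements of $\calG$ arise as strong limits of positive operators.
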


\begin{proof} 
Define $\calT_\infty := \bigcap_{s \in
 J} \overline{\{T_t: \, s \le t \in J\}}$, where the closure is taken in
 the strong operator topology. According to \cite[Proposition~2.5(i) 
and~(vi)]{Glueck2019} the assumptions on our semigroup imply that 
$\calT_\infty$ is non-empty and strongly compact. Hence we can apply 
\cite[Theorem~2.2]{Glueck2019} which yields a projection $P \in 
\calT_\infty$ (denoted by $P_\infty$ in \cite[Theorem~2.2]{Glueck2019}) 
and a group $\calG$, given as the strong closure of $\{T_t|_{PX},: \, t 
\in J\}$ in $\calL(PX)$, with the desired properties. 
Note that the positivity of $P$ follows from $P \in \calT_\infty$, and the positivity 
of the elements of $\calG$ follows from the positivity of the operators 
$T_t|_{P X}$.
\end{proof}

In the above theorem we employed the concept of the 
\emph{semigroup at infinity} $\calT_\infty$ which was recently studied 
in \cite{Glueck2019}. More classical approaches apply the Jacobs--de 
Leeuw--Glicksberg theory directly to the strong closure of $\{T_t:\, t 
\in J\}$, which requires the stronger assumption that all orbits of the 
semigroup be relatively compact. For further information about the 
Jacob-de Leeuw-Glicksberg decomposition of operator semigroups we refer 
to \cite[Section~2.4]{Krengel1985}, \cite[Chapter~16]{Eisner2015} and, 
within the context of $C_0$-semigroups, to 
\cite[Section~V.2]{Engel2000}.

\section{Strong Convergence of Positive Semigroups} \label{section:strong-convergence-of-positive-semigroups}

Let us start right away with the main result of this section:
\begin{theorem} \label{thm:strong-convergence}
	Let $X$ be an ordered Banach space with positive cone $X_+ \not= \{0\}$, let $J = \bbN_0$ or $J = [0,\infty)$ and 
	let $\calT = (T_t)_{t \in J}$ be a positive operator semigroup on $X$. 
	Suppose that the following two assumptions are satisfied:
	\begin{enumerate}[\upshape (a)]
		\item For each vector $0 \not= x \in X_+$ there exists a time $t_x \in J$ such that $T_{t_x} x$ is an 
		almost interior point of $X_+$.
		\item The semigroup $\calT$ is bounded and strongly asymptotically compact. 
	\end{enumerate}
	Then $T_t$ converges strongly as $t \to \infty$. 
	If the limit operator $Q := \lim_{t \to \infty} T_t$ is not zero, 
	then it is of the 
	form $Q = y' \otimes y$. Here, $y$ is an almost interior point of $X_+$ and a fixed point of $\calT$, and $y' \in X'$ is a 
	strictly positive functional and a fixed point of the adjoint semigroup 
	$\calT' := (T_t')_{t\in J}$ (consisting of the dual operators $T_t'$).
\end{theorem}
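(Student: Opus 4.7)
My plan is to apply the Jacobs--de Leeuw--Glicksberg decomposition (Theorem~\ref{thm:jdlg}) and then combine assumption~(a) with Theorem~\ref{thm:existence-of-non-almost-interior-points} to force the reversible part of the semigroup to be at most one-dimensional.

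I would begin by applying Theorem~\ref{thm:jdlg} to obtain a positive projection $P \in \calL(X)$ commuting with $\calT$, with $T_t x \to 0$ for every $x \in \ker P$, and a compact group $\calG \subseteq \calL(PX)$ of positive invertible operators (whose inverses, lying in $\calG$, are therefore also positive) that contains $\{T_t|_{PX} : t \in J\}$. If $P = 0$, then $T_t \to 0 = Q$ strongly and the theorem holds. Otherwise $P \neq 0$: since $X_+ \neq \{0\}$, assumption~(a) already furnishes an almost interior point of $X_+$, so Proposition~\ref{prop:almost-interior-points-imply-total-cone} implies that $X_+$ is total; continuity of the nonzero $P$ then forces $P(X_+) \neq \{0\}$, and $(PX)_+ = PX \cap X_+ = P(X_+)$ is a nontrivial cone in $PX$.

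The core claim is that $\dim PX = 1$, obtained in two steps. \emph{(i)} Any $w \in PX$ that is almost interior in $X_+$ is automatically almost interior in $(PX)_+$: for $\varphi \in (PX)'_+$ the lift $\varphi \circ P \in X'_+$ vanishes at $w$ iff $\varphi$ does, and $\varphi \circ P = 0$ forces $\varphi = 0$ on $PX$. \emph{(ii)} Every nonzero $w \in (PX)_+$ is almost interior in $(PX)_+$: by~(a), $T_{t_w}w$ is almost interior in $X_+$ and lies in $PX$, hence almost interior in $(PX)_+$ by~(i); since $G := T_{t_w}|_{PX} \in \calG$ is invertible with positive (in particular, dense-range) inverse $G^{-1} \in \calG$, Corollary~\ref{cor:almost-interior-points-by-dense-range}(a) shows that $w = G^{-1}(T_{t_w}w)$ is almost interior in $(PX)_+$. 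If $\dim PX \ge 2$, then Theorem~\ref{thm:existence-of-non-almost-interior-points} applied to the nontrivial ordered Banach space $(PX,(PX)_+)$ produces a nonzero positive vector which is \emph{not} almost interior in $(PX)_+$, contradicting~(ii). Hence $\dim PX = 1$.

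Write $PX = \bbR y$ with $y > 0$. Then $\calG$ is a compact subgroup of the multiplicative group of positive reals acting on $\bbR y$, hence $\calG = \{\id_{PX}\}$, so $T_ty = y$ for every $t$; applying~(a) to $y$ gives that $y = T_{t_y}y$ is almost interior in $X_+$. Writing $P = y' \otimes y$, positivity and nontriviality of $P$ force $y' \in X'_+ \setminus \{0\}$, and the commutation $T_tP = PT_t$ translates to $T_t'y' = y'$. For strict positivity of $y'$, suppose $\langle y',w\rangle = 0$ for some $0 < w \in X_+$; then $Pw = 0$, so $P(T_{t_w}w) = \langle y',T_{t_w}w\rangle\, y = 0$, contradicting $y' \neq 0$ and the fact that $T_{t_w}w$ is almost interior in $X_+$. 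Strong convergence $T_t \to P = y' \otimes y$ then follows from the splitting $X = PX \oplus \ker P$. The main obstacle is step~(ii): leveraging the group structure on $\calG$ to bootstrap the ``one-time'' almost-interior conclusion of~(a) into the statement that \emph{every} nonzero positive vector of $PX$ is almost interior in $(PX)_+$, which is what ultimately feeds Theorem~\ref{thm:existence-of-non-almost-interior-points} to collapse $PX$ to a line.
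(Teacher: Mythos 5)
Your proposal is correct and follows essentially the same route as the paper's proof: the Jacobs--de Leeuw--Glicksberg decomposition, the observation that every non-zero vector of $(PX)_+$ must be an almost interior point, Theorem~\ref{thm:existence-of-non-almost-interior-points} to force $\dim PX = 1$, and the compact-subgroup-of-$(0,\infty)$ argument to make $\calT$ act as the identity on $PX$. The only (harmless) deviations are cosmetic: you transport almost-interiority back to $w$ via the positive, dense-range inverse $G^{-1}\in\calG$ and Corollary~\ref{cor:almost-interior-points-by-dense-range}(a), where the paper instead applies Proposition~\ref{prop:almost-interior-points-by-functional-condition} to the operator $G$ with $GT_{t_x}|_{PX}=P$, and you prove strict positivity of $y'$ by contraposition rather than by the direct computation $\langle y',x\rangle=\langle y',T_{t_x}x\rangle>0$.
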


Before we prove Theorem~\ref{thm:strong-convergence} let us point out that the theorem imposes no special conditions on the 
positive cone $X_+$ except for the assumption that it be non-zero. In particular, $X_+$ is not assumed to be normal nor is 
it assumed to be generating. 
However, we should note that the assumptions of the theorem clearly imply the existence of an almost interior point of $X_+$, 
so the assumptions can only be satisfied if the positive cone $X_+$ is at least total 
(see Proposition~\ref{prop:almost-interior-points-imply-total-cone}).

We mention here that under much stronger conditions (the cone $X_+$ is supposed to be normal and to have non-empty interior) 
the theorem was proved by Makarow and Weber in \cite[Theorems~1 and~4]{Makarow2000}, see also \cite[p. 207]{Wulich2017}.
\begin{proof}[Proof of Theorem~\ref{thm:strong-convergence}]
Due to assumption~(b) we can apply 
Theorem~\ref{thm:jdlg}; let $P$ and $\calG$ be as in this theorem. 
If $P = 0$ there is nothing to show, so we may assume that $PX \not= \{0\}$. 
It follows from assumption~(a) and Proposition~\ref{prop:almost-interior-points-imply-total-cone} that $X_+$ is total in $X$. 
Hence, $(PX)_+$ is total in $PX$ and in 
particular, $(PX)_+ \not= 0$. 
\par We now show that 
\begin{enumerate}[\upshape (1)]
 \item  $PX_+ \setminus \{0\}$ consists of almost-interior points of $X_+$, 
\item  $PX$ is $1$-dimensional, 
\item  $\calT$ acts as the identity on $PX$, 
\item  $T_t \to P$ strongly and 
\item  $P$ is of the claimed form. 
\end{enumerate}
To this end, let $0 \not= x \in (PX)_+$. By 
Corollary~\ref{cor:almost-interior-points-by-dense-range}(b), $T_{t_x}x$
 is an almost interior point not only of $X_+$ but also of $(PX)_+$. 
 We note that $G T_{t_x}|_{PX} = P$ for some $G \in \calG$ since $T_{t_x}$ 
is an element of $\calG$, and the latter set is a subgroup of the 
invertible operators on $PX$. In particular, $G$ maps $T_{2t_x}x$ to the
 almost interior point $T_{t_x}x$ of $X_+$, so 
Proposition~\ref{prop:almost-interior-points-by-functional-condition} 
shows that $G$ maps almost interior points of $(PX)_+$ to almost 
interior points of $X_+$. Consequently, $x = GT_{t_x}x$ is an almost 
interior point of $X_+$, which shows~(1). 
\par
By employing~(1) and again 
Corollary~\ref{cor:almost-interior-points-by-dense-range}(b) we see that
 every point in $PX_+ \setminus \{0\}$ is also an almost interior point 
of $(PX)_+$, so~(2) follows from 
Theorem~\ref{thm:existence-of-non-almost-interior-points}. 
To show~(3), let $0 < y \in PX$ be an element that spans $PX$. 
Then $P$ is of the form $P = y' \otimes y$ for some $y' \in X' \setminus \{0\}$, where 
$\langle y',y\rangle = 1$. As $y$ and $P$ are positive, so is $y'$. 
For each $G \in \calG$ we have $Gy = \langle y',Gy\rangle y$, so the mapping 
\begin{align*}
 \calG \ni G \mapsto \langle y', Gy\rangle \in (0,\infty)
\end{align*} 
 is a (strongly continuous) group homomorphism; as $\calG$ is strongly compact, 
 its image is a compact subgroup of the multiplicative group $(0,\infty)$, i.e.\ the image is 
equal to $\{1\}$. Hence, $Gy = y$ for all $G \in \calG$, which 
shows~(3). 
\par 
As $T_t$ converges strongly to $0$ on $\ker P$ (see Theorem~\ref{thm:jdlg}(b)), 
(4) follows from~(3). 
\par It remains to show~(5), and we have already seen in~(3) that $y$ is a fixed point of 
$\calT$ and in~(1) that $y$ is a quasi-interior point of $X_+$. 
To see that $y'$ is a fixed point of the dual operator semigroup $\calT'$ observe that, 
for each $t \in J$ and each $x \in X$, 
\begin{align*} 
 \langle T_t'y',x\rangle y = \langle y', T_tx\rangle y = PT_tx = T_tPx = Px
 = \langle y', x\rangle y,
\end{align*}
where we have used $P=y' \otimes y$ for the second and the last equation. 
So $\langle T_t'y',x\rangle = \langle y',x\rangle$ and hence, $T_t'y' = y'$. 
Finally, to see that $y'$ is strictly positive, note that for $x \in X_+ \setminus \{0\}$
\begin{align*}
 \langle y', x\rangle = \langle T_{t_x}'y',x\rangle = \langle y', T_{t_x}x\rangle > 0
 \end{align*} 
since $T_{t_x}x$ is an almost interior point of $X_+$. 
We have thus shown the theorem with $Q:= P$. 
\end{proof}
The following corollary illustrates that the assumption (a) of the theorem is sometimes satisfied as a consequence of a perturbation.   
The corollary assumes knowledge of the basic theory of $C_0$-semigroups 
(see e.g.\ \cite{Engel2000} for a comprehensive 
treatment of this theory). 
We use the notation $(e^{tA})_{t \in [0,\infty)}$ for a $C_0$-semigroup with generator $A$.
\begin{corollary} \label{cor:perturbed-semigroup} 
Let $X$ be an ordered Banach space and let $(e^{tA})_{t \in [0,\infty)}$ be a positive and contractive\footnote{\, This  means:  $\norm{e^{tA}} 
\le 1$ for all $t \in [0,\infty)$.} $C_0$-semigroup on $X$ whose generator $A: X \supseteq D(A) \to X$ has compact resolvent. 
Let $B \in \calL(X)$ be a positive operator such that $Bx$ is an almost interior point of $X_+$ for each $0\neq x \in X_+$ and let $c \ge \norm{B}$. 
Then $A+B-cI$ generates a positive $C_0$-semigroup $\big(e^{t(A+B-cI)}\big)_{t \in [0,\infty)}$ 
which converges strongly as $t \to \infty$. 
\end{corollary}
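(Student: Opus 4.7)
My plan is to apply Theorem~\ref{thm:strong-convergence} to the perturbed semigroup $\calT = (T_t)_{t \in [0,\infty)}$ with $T_t := e^{t(A+B-cI)}$, so I would verify that (i)~it is a well-defined positive contractive $C_0$-semigroup, (ii)~it is strongly asymptotically compact, and (iii)~it maps every non-zero positive vector to an almost interior point of $X_+$ after some time. Concerning (i), the bounded perturbation theorem for $C_0$-semigroups shows that $A+B-cI$ is a generator; positivity can be read off the Dyson--Phillips series $T_t = e^{-ct}\sum_{n=0}^{\infty} U_n(t)$ with $U_0(t) = e^{tA}$ and $U_{n+1}(t) = \int_0^t e^{(t-s)A}B\,U_n(s)\,ds$, each term of which is a positive operator; finally, the standard estimate $\norm{T_t} \le e^{-ct}e^{t\norm{B}} \le 1$ (using $c \ge \norm{B}$) delivers contractivity.

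For (ii) I would exploit that $A+B-cI$ inherits the compact resolvent of $A$. For sufficiently large $\lambda$, the identity
\[
    R(\lambda, A+B-cI) = R(\lambda+c, A)\bigl(I + B\,R(\lambda, A+B-cI)\bigr)
\]
exhibits $R(\lambda, A+B-cI)$ as the composition of the compact operator $R(\lambda+c, A)$ with a bounded operator, hence it is itself compact. Fixing such a $\lambda$ and writing each $x \in D(A+B-cI)$ as $x = R(\lambda, A+B-cI)y$ gives $T_t x = R(\lambda, A+B-cI)\,T_t y$; the bounded set $\{T_t y : t \ge 0\}$ is mapped by the compact resolvent into a relatively compact one. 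A standard $\varepsilon/3$-approximation using density of $D(A+B-cI)$ and the uniform bound $\norm{T_t} \le 1$ extends relative compactness to all orbits in $X$, which for a $C_0$-semigroup is equivalent to strong asymptotic compactness.

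The decisive step, and the one I expect to be the main obstacle, is verifying hypothesis~(a) of Theorem~\ref{thm:strong-convergence}. Fixing $0 \neq x \in X_+$, I would apply the variation-of-constants formula
\[
    T_t x = e^{t(A-cI)} x + \int_0^t e^{(t-s)(A-cI)}\,B\,T_s x \, ds
\]
and choose $t > 0$ so small that $T_t x \ne 0$, which is possible because $T_0 x = x \neq 0$ and $t \mapsto T_t x$ is continuous. The crucial observation is that evaluating the integrand at the upper endpoint $s = t$ gives $B\,T_t x$, which is an almost interior point of $X_+$ by the hypothesis on $B$. Hence, for any non-zero $\varphi \in X'_+$, the continuous non-negative scalar function $s \mapsto \langle \varphi, e^{(t-s)(A-cI)}\,B\,T_s x \rangle$ is strictly positive at $s = t$, and therefore strictly positive on a left-neighbourhood of $t$; integrating and adding the non-negative contribution of $e^{t(A-cI)}x$ yields $\langle \varphi, T_t x \rangle > 0$. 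Since $\varphi$ was arbitrary, $T_t x$ is an almost interior point of $X_+$, so hypothesis~(a) holds with $t_x = t$ and Theorem~\ref{thm:strong-convergence} completes the proof.
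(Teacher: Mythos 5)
Your proof is correct and follows essentially the same route as the paper: you verify the hypotheses of Theorem~\ref{thm:strong-convergence}, obtaining the positivity-improving property from the first-order term of the perturbation expansion evaluated at the endpoint of integration, and strong asymptotic compactness from the compact resolvent of the perturbed generator together with contractivity. The only cosmetic differences are that the paper derives contractivity from dissipativity of $A+B-cI$ via the Lumer--Phillips theorem rather than from the estimate $e^{-ct}e^{t\norm{B}} \le 1$, cites Engel--Nagel for the ``bounded semigroup with compact resolvent has relatively compact orbits'' step that you reprove by hand, and works with the Dyson--Phillips lower bound $e^{t(A+B)} \ge \int_0^t e^{(t-s)A}Be^{sA}\,\mathrm{d}s$ (with the unperturbed semigroup inside the integral) in place of your variation-of-constants formula.
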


Note that the assertion of the corollary is trivial if $c > \norm{B}$, i.e.\ the interesting 
case is $c = \norm{B}$.
\begin{proof}[Proof of Corollary~\ref{cor:perturbed-semigroup}]
First note that $A+B-cI$ and $A+B$ generate $C_0$-semigroups by standard perturbation 
theory (see e.g.~\cite[Theorem~III.1.3]{Engel2000}). 
We have \begin{align*} 
         e^{t(A+B-cI)} = e^{-ct}e^{t(A+B)}\;\text{ for all } t \ge 0.
        \end{align*}
 It follows from the 
Dyson--Phillips series representation of perturbed $C_0$-semigroups (see
 e.g.\ \cite[Theorem~III.1.10]{Engel2000}) that $e^{t(A+B)}$ is positive
 for each $t \ge 0$ and hence, so is $e^{t(A+B-cI)}$.
\par
 We now check that both assumptions~(a) 
and~(b) of Theorem~\ref{thm:strong-convergence} are satisfied. 
To see~(a), we again use the Dyson--Phillips series for $e^{t(A+B)}$ which 
shows that 
\begin{align*}
e^{t(A+B)} \ge \int_0^t e^{(t-s)A}Be^{sA} \dx s
\end{align*} 
for all $t \in [0,\infty)$, where the integral is to be understood in the strong sense. 
Now, let $0\neq x \in X_+$. By strong continuity we can choose a time $t_x > 0$
 such that $e^{t_xA}x \not= 0$. For each $0\neq x' \in X'_+$, 
the continuous mapping 
 \begin{align*}
 [0,t_x] \ni s \mapsto \langle x', e^{(t_x-s)A}Be^{sA}x\rangle \in [0,\infty) 
 \end{align*} 
is not zero at $s = t_x$ since $Be^{t_xA}x$ is an almost interior point of $X_+$. 
Hence, 
\begin{align*}
 \langle x',e^{t_x(A+B)}x\rangle \ge \int_0^{t_x} \langle x', 
e^{(t_x-s)A}Be^{sA}x\rangle \dx s > 0,
\end{align*}
 which shows that $e^{t_x(A+B)}x$ is an almost interior point of $X_+$. 
 Consequently, so is $e^{t_x(A+B-cI)}x$. 
\par 
To show that assumption~(b) of Theorem~\ref{thm:strong-convergence} is satisfied, 
first note that $A+B-cI$ has compact resolvent as its domain coincides with the domain 
of $A$. Since $A$ is dissipative\footnote{\, This means: 
$\norm{(\lambda\id-A)x} \geq \lambda\norm x$ for all $\lambda>0$ and 
$x\in D(A)$; see \cite[Proposition~II.3.23]{Engel2000} for a useful 
characterisation of dissipativity.}
 and since $c \ge \norm{B}$, it follows that 
$A+B-cI$ is also dissipative, so the semigroup $\big(e^{t(A+B-cI)}\big)_{t \in [0,\infty)}$ 
is contractive (this is a simple special case of the 
Lumer--Phillips theorem, see for instance \cite[Theorem~II.3.15]{Engel2000}). 
But boundedness of a $C_0$-semigroup together with compactness of the resolvent implies 
that all orbits of the semigroup are relatively compact (see for instance 
\cite[Corollary~V.2.15(i)]{Engel2000}). 
\par Thus, Theorem~\ref{thm:strong-convergence} is applicable and implies the assertion.
\end{proof}

\section{Uniform Convergence of Positive Semigroups} \label{section:uniform-convergence-of-positive-semigroups}

The main theorem of this section is Theorem~\ref{thm:uniform-convergence} below. It is a version of Theorem~\ref{thm:strong-convergence}, 
but yields  -- due to stronger assumptions -- convergence with respect to the operator norm. 

Let $T \in \calL(X)$ for a Banach space $X$. The operator $T$ is called \emph{power bounded} if $\sup_{n \in \bbN_0} \norm{T^n} < \infty$, and $T$ is called \emph{quasi-compact} if there exists an integer $m \in \bbN$ and a compact linear operator $K$ on $X$ such that $\norm{T^m - K} < 1$.
In the following proposition we list a number of properties of quasi-compact operators which we are going to use in the sequel; these properties are certainly not new, but we collect them here in concise way for later reference. 
For a real or complex Banach space $X$ we let  $\calK(X) \subseteq \calL(X)$ denote the closed ideal of all compact linear operators on $X$. 
The quotient Banach algebra $\calL(X) / \calK(X)$, endowed with the usual quotient norm 
$\norm{[T]}:= \inf\{\norm{T-K}\colon K\in \calK(X)\}$ 
for the equivalence class $[T]=T+\calK(X)$ of the operator $T$, is called the \emph{Calkin algebra} over $X$. 
The latter is very useful in the following proposition.

\begin{proposition} \label{prop:quasi-compact-operators}
	Let $X$ be a Banach space and let $T \in \calL(X)$.
	\begin{enumerate}[\upshape (a)]
		\item Let $[T]$ denote the equivalence class of $T$ in the  Calkin algebra  \linebreak 
		$\calL(X)/\calK(X)$. The operator $T$ is quasi-compact if and only if $\,[T]^n \to 0$ with respect to the quotient norm on $\calL(X)/\calK(X)$ as $n \to \infty$.
		\item Let $T$ be quasi-compact and suppose that $S \in \calL(X)$ is power bounded and commutes with $T$. Then $TS$ is quasi-compact, too.
		\item Let $k \in \bbN$. Then $T$ is quasi-compact if and only if $T^k$ is quasi-compact. 
		\item Let $J = \bbN_0$ or $J = [0,\infty)$ and let $(T_t)_{t \in J}$ be a bounded operator semigroup on $X$. If $T_{t_0}$ is 
		quasi-compact for one time $t_0 \in J$, then $T_t$ is quasi-compact for each time $t \in J \setminus \{0\}$.
	\end{enumerate}
\end{proposition}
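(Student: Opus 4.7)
The whole proposition lives in the Calkin algebra $\calA := \calL(X)/\calK(X)$, and the plan is to prove (a) first, then obtain (b), (c), (d) as algebraic consequences in $\calA$. For (a), I would use that $\calK(X)$ is a two-sided ideal so that $[T]^n = [T^n]$ and $\norm{[T^n]} = \inf_{K \in \calK(X)} \norm{T^n - K}$. Quasi-compactness of $T$ thus amounts to $\norm{[T]^m} < 1$ for some $m \in \bbN$, and submultiplicativity together with the crude bound $\norm{[T]^r} \le \norm{T}^r$ on residues $0 \le r < m$ then forces $\norm{[T]^n} \to 0$. The converse is immediate from the same identity.

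For (b), since $[T]$ and $[S]$ commute in $\calA$ and since $\sup_n\norm{[S]^n} \le \sup_n \norm{S^n} < \infty$ by power-boundedness of $S$, I would estimate
\[
 \norm{[TS]^n} \;=\; \norm{[T]^n\,[S]^n} \;\le\; \norm{[T]^n}\cdot \sup_{n}\norm{S^n},
\]
which tends to $0$ by (a); a second appeal to (a) then makes $TS$ quasi-compact. For (c), one direction is just the subsequence observation that $([T]^{kn})_n$ tends to $0$ whenever $([T]^n)_n$ does. For the converse, I would invoke Euclidean division $n = qk + r$ with $0 \le r < k$ and use
\[
 \norm{[T]^n} \;\le\; \norm{[T^k]^q}\cdot \max_{0 \le r < k}\norm{T}^r,
\]
letting $q \to \infty$.

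For (d), my plan is a single argument covering both $J = \bbN_0$ and $J = [0,\infty)$: given $t \in J \setminus \{0\}$, choose $k \in \bbN$ large enough that $kt \ge t_0$, and decompose $T_{kt} = T_{kt - t_0}\, T_{t_0}$ via the semigroup law. The commuting factor $T_{kt-t_0}$ is power-bounded because its powers $T_{n(kt - t_0)}$ all lie inside the uniformly bounded semigroup, so (b) renders $T_{kt} = T_t^k$ quasi-compact and (c) then transfers this to $T_t$. The step I expect to require the most care is simply the bookkeeping in this final argument — choosing $k$, verifying that $T_{kt-t_0}$ is power-bounded, and checking that it commutes with $T_{t_0}$ — but each of these is an immediate consequence of the semigroup law and the boundedness hypothesis, so I do not anticipate any genuine obstacle.
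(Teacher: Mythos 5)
Your proposal is correct and follows essentially the same route as the paper: reduction to the Calkin algebra for (a) and (b), the identity $[T^k]^n=[T]^{kn}$ for (c), and the decomposition $T_t^k=T_{t_0}T_{kt-t_0}$ combined with (b) and (c) for (d). The only cosmetic difference is that you prove the implication ``$T^k$ quasi-compact $\Rightarrow$ $T$ quasi-compact'' via Euclidean division in the Calkin algebra, whereas the paper reads it off directly from the definition; both are immediate.
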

\begin{proof}
	(a) By definition, $T$ is quasi-compact if and only if there exists $m \in \bbN_0$ such that $\norm{[T]^m} =\norm{[T^m]} < 1$, 
            which is in turn equivalent to $\norm{[T]^n} \to 0$ as $n \to \infty$.
	
	(b) As $S$ is power bounded, we have $M := \sup_{n \in \bbN_0} \norm{S^n} < \infty$. Moreover,
	\begin{align*}
		\norm{[TS]^n} = \norm{[T^n][S^n]} \le \norm{[T]^n} \norm{[S]^n} \le \norm{[T]^n} M \to 0 \quad \text{for } n \to \infty;
	\end{align*}
	for the first equality we used that $S$ commutes with $T$. It follows from~(a) that $TS$ is quasi-compact.
	
	(c) The implication ``$\Leftarrow$'' follows right from the definition of quasi-com\-pactness, and the converse 
	implication ``$\Rightarrow$'' follows from~(a) since $[T^k]^n = [T]^{kn}$ for all $k,n \in \bbN$.
	
	(d) Let $t \in J \setminus \{0\}$ and choose $n \in \bbN$ such that $nt \ge t_0$. 
	The operator $T_t^n = T_{nt} = T_{t_0}T_{nt - t_0}$ is quasi-compact according to~(b), so it follows from~(c) that $T_t$ 
	is quasi-compact, too. 
\end{proof}
A second ingredient for the proof of Theorem~\ref{thm:uniform-convergence} is the next proposition about the orbits of quasi-compact operators. Again, this is far from being 
new (for instance, the proposition can easily be derived from the spectral 
representation of quasi-compact operators in \cite[Theorem 2.8 on page 
91]{Krengel1985}), but for the convenience of the reader, and also to be
 more self-contained, we include an elementary proof.

\begin{proposition} \label{prop:consequences-of-quasi-compactness}
	Let $X$ be a Banach space, let $T \in \calL(X)$ and assume that $T$ is quasi-compact.
	\begin{enumerate}[\upshape (a)]
		\item If $T$ is power-bounded, then the orbit $\{T^nx: \, n \in\bbN_0\}$ is relatively compact 
                      in $X$ for each $x \in X$.
		\item If $T^n$ converges strongly to an operator $P \in \calL(X)$ as $n \to \infty$, 
	then $T^n$ even converges with respect to the operator norm to $P$ as $n \to \infty$.
	\end{enumerate}
\end{proposition}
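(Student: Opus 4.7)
The plan is to treat (a) via the Riesz spectral decomposition induced by the essential spectral radius being strictly less than one, and (b) via a $\limsup$ estimate that combines strong convergence with compact approximation.

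For part (a), I would first use part~(a) of Proposition~\ref{prop:quasi-compact-operators} to deduce that $\norm{[T]^n}\to 0$ in the Calkin algebra $\calL(X)/\calK(X)$, so $\norm{[T]^{n_0}}<1$ for some $n_0\in\bbN$, and hence the essential spectral radius satisfies $r_{\mathrm{ess}}(T)\leq\norm{[T]^{n_0}}^{1/n_0}<1$ by the spectral radius formula in this Banach algebra. Since $\sigma(T)\subseteq\overline{\bbD}$ by power-boundedness, for any $r\in(r_{\mathrm{ess}}(T),1)$ the set $\sigma(T)\cap\{|\lambda|\geq r\}$ consists of finitely many eigenvalues of finite algebraic multiplicity. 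The corresponding Riesz spectral projection $Q$ is finite-rank and commutes with $T$, and $T|_{(I-Q)X}$ has spectrum inside $\{|\lambda|<r\}$, so $\norm{T^n(I-Q)}\to 0$ exponentially. For each $x\in X$ the orbit decomposes as $T^nx=T^nQx+T^n(I-Q)x$: the first term stays in a bounded subset of the finite-dimensional $T$-invariant space $QX$ (hence is relatively compact), while the second tends to zero, so the orbit is relatively compact.

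For part (b), note first that $P$ is a bounded projection with $TP=PT=P$ (obtained by passing $T$ into the defining strong limit on either side), so $T^nP=P$ and $T^n-P=T^n(I-P)$. On $Y:=(I-P)X$ the operator $T_Y:=T|_Y$ is power-bounded and satisfies $T_Y^n\to 0$ strongly; it is also quasi-compact, because choosing $N$ large enough so that $T^N=K+R$ with $\norm{R}<1/\norm{I-P}$ yields a decomposition $T_Y^N=K_Y+R_Y$ with $K_Y:=(I-P)K|_Y$ compact and $R_Y:=(I-P)R|_Y$ of norm $q<1$. The key step is $\norm{T_Y^nK_Y}\to 0$ in operator norm: indeed, $(T_Y^n)$ is equibounded and converges strongly to $0$, so it converges uniformly to $0$ on the relatively compact set $K_Y(\overline{B_Y})$. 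Combining $T_Y^{n+N}=T_Y^nK_Y+T_Y^nR_Y$ with $a_n:=\norm{T_Y^n}$ and $\eta_n:=\norm{T_Y^nK_Y}\to 0$ yields the recursion $a_{n+N}\leq\eta_n+qa_n$; since $(a_n)$ is bounded and $\limsup_n a_{n+N}=\limsup_n a_n$, taking $\limsup$ forces $(1-q)\limsup a_n\leq 0$, so $a_n\to 0$, and finally $\norm{T^n-P}\leq\norm{I-P}\cdot a_n\to 0$.

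The main obstacle is the transition from strong to norm convergence in part~(b); it is resolved precisely by the interplay between compactness (which produces $\eta_n\to 0$) and the strict contraction $R_Y$ (which, via the $\limsup$ inequality, absorbs the residual operator norm). Part~(a) is then a comparatively routine consequence of the standard Riesz--Schauder decomposition once $r_{\mathrm{ess}}(T)<1$ has been established.
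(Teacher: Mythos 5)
Both parts of your argument are correct, but each takes a genuinely different route from the paper. For (a), the paper gives a completely elementary total-boundedness argument: writing $T^nx=(T^m-K)T^{n-m}x+KT^{n-m}x$ with $\norm{T^m-K}\le\varepsilon$, the orbit is covered by finitely many balls of radius $\varepsilon(M+1)$ coming from a finite $\varepsilon$-net of $MK(\overline{B})$. You instead invoke the Riesz--Schauder spectral decomposition of a quasi-compact operator ($r_{\mathrm{ess}}(T)<1$, finite-rank Riesz projection $Q$, exponential decay on $(I-Q)X$) -- this is exactly the machinery the paper explicitly says the proposition \emph{could} be derived from (Krengel's spectral representation) but deliberately avoids in order to stay self-contained; your route buys more (a finite-dimensional-plus-exponentially-decaying splitting) at the cost of importing nontrivial Fredholm/Riesz theory, including the fact that spectral points outside the essential spectral radius are isolated eigenvalues with finite-rank spectral projections. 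For (b), the paper proves directly that $(T^n)$ is Cauchy in operator norm, using that $T^nK\to PK$ in norm because strong convergence is uniform on the relatively compact set $K(\overline{B})$, and then estimating $\norm{T^{n_1}-T^{n_2}}\le 2M\varepsilon+\varepsilon$. You instead split off the limit projection, restrict to the $T$-invariant closed subspace $Y=(I-P)X$, verify quasi-compactness of $T|_Y$, and run the recursion $a_{n+N}\le\eta_n+qa_n$ with $\eta_n\to0$ and $q<1$ to force $\limsup a_n=0$; this is equally elementary, uses the same key compactness observation, and has the small aesthetic advantage of producing the norm limit directly rather than via completeness. The only loose end is the degenerate case $P=I$ (where $1/\norm{I-P}$ is undefined), but there $TP=P$ forces $T=I$ and the claim is trivial, so this is cosmetic rather than a gap.
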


For the proof of assertion~(b) we need the following elementary observation: 
if a sequence of operators $(T_n)_{n \in \bbN_0} \subseteq \calL(X)$ on a Banach space $X$ converges strongly to an operator 
$S \in \calL(X)$ and if $C \subseteq X$ is a relatively compact set, then the convergence of $T_n$ to $S$ is uniform on $C$, 
meaning that $\sup_{x \in C} \norm{T_nx - Sx} \to 0$ as $n \to \infty$.

\begin{proof}[Proof of Proposition~\ref{prop:consequences-of-quasi-compactness}]
	Each of the assumptions (a) and (b) implies that $T$ is power-bounded. 
	Set $M := \sup_{n \in \bbN_0} \norm{T^n} < \infty$.
	
	(a) Fix a vector $x \in X$, say of norm $1$. In view of the norm-completeness of $X$ it suffices to show that the orbit $\{T^nx: \, n \in\bbN_0\}$ is totally bounded, 
	so let $\varepsilon > 0$. As $T$ is quasi-compact, it follows from Proposition~\ref{prop:quasi-compact-operators}(a) that 
	there exists an integer $m \in \bbN$ and a compact operator $K \in \calL(X)$ such that $\norm{T^m - K} \le \varepsilon$. 
	For each integer $n \ge m$ we have
	\begin{align*}
		T^nx = (T^m-K)T^{n-m}x + KT^{n-m}x,
	\end{align*}
	where the first sumand $(T^m-K)T^{n-m}x$ has norm at most $\varepsilon M$, and \linebreak the second sumand $KT^{n-m}x$ 
	is contained in the relatively compact set $M K(\overline{B})$, where $\overline{B}$ denotes the closed unit ball in $X$. 
	Since the set $M K(\overline{B})$ can be covered by finitely many balls of radius $\varepsilon$, it follows that the 
	set $\{T^nx: \, n \ge m\}$, and hence also the entire orbit $\{T^nx: \, n \in \bbN\}$, can be covered by finitely many balls
	of radius $\varepsilon(M+1)$. This proves that the orbit is totally bounded and thus relatively compact.
	
	(b) It suffices to prove that $(T^n)_{n \in \bbN_0}$ is a Cauchy sequence with respect to the operator norm, so let $\varepsilon > 0$. As above, we can find an integer $m \in \bbN$ and a compact operator $K \in \calL(X)$ such that $\norm{T^m - K} \le \varepsilon$. Since the sequence $(T^n)_{n \in \bbN_0}$ converges strongly to $P$, it follows from the compactness of $K$ that $T^nK$ converges to $PK$ with respect to the operator norm as $n \to \infty$. In particular, $(T^nK)_{n \in \bbN_0}$ is a Cauchy sequence, so we can find $n_0 \in \bbN$ such that $\norm{T^{n_1}K - T^{n_2}K} \le \varepsilon$ for all $n_1,n_2 \ge n_0$. For $n_1,n_2 \ge n_0 + m$ we conclude that
	\begin{align*}
		\norm{T^{n_1}- T^{n_2}} \leq  \norm{(T^{n_1-m} - T^{n_2-m})(T^m-K)} + \norm{(T^{n_1-m} - T^{n_2-m}) K} & \\
		\le 2M\varepsilon & + \varepsilon,
	\end{align*}
	which shows that $(T^n)_{n \in \bbN_0}$ is indeed a Cauchy sequence.
\end{proof}

With a few more arguments one can show that in Proposition~\ref{prop:consequences-of-quasi-compactness}(b) it is actually sufficient to 
assume that $T^n$ converges to $P$ with respect to the \emph{weak} operator topology as $n \to \infty$. However, since this is not 
needed for the proof of our main result, we do not discuss this in detail.

Now we can prove the main result of this section.

\begin{theorem} \label{thm:uniform-convergence}
	Let $X$ be an ordered Banach space with positive cone $X_+ \not= \{0\}$, let $J = \bbN_0$ or $J = [0,\infty)$ and 
	let $\calT = (T_t)_{t \in J}$ be a positive operator semigroup on $X$. Suppose that the following two assumptions 
	are satisfied:
	\begin{enumerate}[\upshape (a)]
		\item For each vector $0 \not= x \in X_+$ there exists a time $t_x \in J$ such that $T_{t_x} x$ is an almost interior point 
		of $X_+$.
		\item The semigroup $\calT$ is bounded and there exists a time $\tau \in J$ such that $T_\tau$ is quasi-compact.
	\end{enumerate}

	Then $T_t$ converges with respect to the operator norm as $t \to \infty$. 
	If the limit operator $Q := \lim_{t \to \infty} T_t$ is not zero, then it is of the 
	form $Q = y' \otimes y$. Here, $y$ is an 
	almost interior point of $X_+$ and a fixed point of $\calT$, and $y' \in X'$ is a strictly positive functional and a fixed point 
	of the dual semigroup $\calT' := (T_t')_{t\in J}$.
\end{theorem}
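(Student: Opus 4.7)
The plan is to reduce Theorem~\ref{thm:uniform-convergence} to Theorem~\ref{thm:strong-convergence} by showing that the quasi-compactness assumption implies strong asymptotic compactness, and then to upgrade the resulting strong convergence to norm convergence by exploiting Proposition~\ref{prop:consequences-of-quasi-compactness}(b) together with the fact that any non-zero limit operator $Q$ must commute with $\calT$.

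First I would verify that the hypotheses of Theorem~\ref{thm:strong-convergence} are met. Boundedness is immediate from~(b). For strong asymptotic compactness, I fix $\tau$ so that $T_\tau$ is quasi-compact (discarding the uninteresting case $\tau = 0$ in which $X$ is finite-dimensional) and observe via Proposition~\ref{prop:quasi-compact-operators}(d) that $T_t$ is quasi-compact for every $t \in J\setminus\{0\}$. Given $x \in X$ and $t_n \to \infty$, I want to show that $(T_{t_n}x)$ is totally bounded. Fix $\varepsilon > 0$; by Proposition~\ref{prop:quasi-compact-operators}(a) find $m \in \bbN$ and a compact $K \in \calL(X)$ with $\norm{T_\tau^m - K} < \varepsilon$. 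For all $n$ with $t_n \geq m\tau$, write
\begin{align*}
T_{t_n}x \;=\; T_\tau^m T_{t_n - m\tau}x \;=\; K T_{t_n - m\tau}x \;+\; (T_\tau^m - K) T_{t_n - m\tau}x,
\end{align*}
where the first summand lies in the relatively compact set $K\bigl(\overline{B_{M\norm{x}}(0)}\bigr)$ with $M = \sup_{s \in J}\norm{T_s}$, and the second has norm at most $\varepsilon M\norm{x}$. Hence $\{T_{t_n}x : n \text{ large}\}$ is totally bounded, hence relatively compact.

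Applying Theorem~\ref{thm:strong-convergence} then produces a strong limit $Q$ which is either zero or of the claimed form $Q = y' \otimes y$, with $\calT y = \{y\}$ and $\calT' y' = \{y'\}$. To upgrade to norm convergence, I would invoke Proposition~\ref{prop:consequences-of-quasi-compactness}(b) for the quasi-compact power-bounded operator $T_\tau$: since $T_\tau^n = T_{n\tau}$ converges strongly to $Q$ as $n \to \infty$, it converges in operator norm, i.e.\ $\norm{T_{n\tau} - Q} \to 0$.

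For $J = \bbN_0$ the theorem follows by taking $\tau = 1$. For $J = [0,\infty)$ I still need to pass from the discrete subsequence $(n\tau)$ to all real times. The key observation is that $Q T_s = Q$ for every $s \in J$: this is trivial if $Q = 0$, and otherwise follows from
\begin{align*}
QT_s x \;=\; \langle y', T_s x\rangle\, y \;=\; \langle T_s' y', x\rangle\, y \;=\; \langle y', x\rangle\, y \;=\; Qx,
\end{align*}
using $T_s' y' = y'$. For $t \geq m\tau$ I write $t = n\tau + s$ with $n \geq m$ and $s \in [0,\tau)$, and then
\begin{align*}
\norm{T_t - Q} \;=\; \norm{T_{n\tau} T_s - Q T_s} \;\leq\; \norm{T_{n\tau} - Q}\cdot M \;\longrightarrow\; 0.
\end{align*}
The remaining assertions about $y$, $y'$ and the form of $Q$ are inherited directly from Theorem~\ref{thm:strong-convergence}. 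The only mildly subtle point in the whole argument is the verification of strong asymptotic compactness in the continuous-time case without strong continuity, which is handled by the totally-bounded-mod-$\varepsilon$ decomposition above.
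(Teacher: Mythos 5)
Your proof is correct. For $J = \bbN_0$ it coincides with the paper's argument: Proposition~\ref{prop:consequences-of-quasi-compactness}(a) gives relatively compact orbits, Theorem~\ref{thm:strong-convergence} gives strong convergence, and Proposition~\ref{prop:consequences-of-quasi-compactness}(b) upgrades this to the operator norm. For $J = [0,\infty)$, however, your route genuinely differs from the paper's. The paper never verifies strong asymptotic compactness of the continuous-time semigroup; instead it fixes $t > 0$, checks (via Corollary~\ref{cor:almost-interior-points-for-semigroups}) that the discrete subsemigroup $(T_{nt})_{n \in \bbN_0}$ satisfies the hypotheses of the theorem, obtains norm convergence $T_t^n \to Q_t$, and then invokes the external result \cite[Theorem~2.1(b)]{GerlachLB} to conclude both that $Q_t$ is independent of $t$ and that the full net $(T_t)$ converges in norm. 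You instead adapt the totally-bounded-modulo-$\varepsilon$ decomposition of Proposition~\ref{prop:consequences-of-quasi-compactness}(a) to arbitrary real times $t_n \to \infty$ (which works precisely because no strong continuity is needed there), apply Theorem~\ref{thm:strong-convergence} once to the whole semigroup, upgrade along the lattice $(n\tau)$ via Proposition~\ref{prop:consequences-of-quasi-compactness}(b), and glue by hand using $QT_s = Q$ (a consequence of $T_s'y' = y'$, with the case $Q=0$ trivial) together with uniform boundedness. The trade-off: your argument is self-contained and elementary, at the cost of reproving a compactness estimate; the paper's is shorter but leans on a cited gluing theorem. Both are valid; your handling of the only delicate point --- asymptotic compactness without strong continuity --- is done correctly, and the remark that $\tau = 0$ forces $\dim X < \infty$ (so one may always pass to a non-zero quasi-compact time via Proposition~\ref{prop:quasi-compact-operators}(d)) is sound.
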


A special case of this result was proved by Makarow and Weber in \cite[Theorems~3 and~5]{Makarow2000}.

\begin{proof}[Proof of Theorem~\ref{thm:uniform-convergence}]
	According to Proposition~\ref{prop:quasi-compact-operators}(d), the operator $T_t$ is quasi-compact for each $t \in J \setminus \{0\}$.

	For the proof of the theorem we distinguish the two cases $J = \bbN_0$ and $J = [0,\infty)$. 
	First let $J = \bbN_0$. 
	Then $T_t = T_1^t$ for all $t \in J = \bbN_0$, and it follows from Proposition~\ref{prop:consequences-of-quasi-compactness}(a)
	and Theorem~\ref{thm:strong-convergence} that  
    $T_t$ converges strongly to an operator $Q$ with the claimed properties as $t \to \infty$. 
	Proposition~\ref{prop:consequences-of-quasi-compactness}(b) then 
	implies that the convergence in fact takes place with respect to the operator norm. 
	  
		Now we consider the case $J = [0,\infty)$ and we reduce it to the time-discrete case, 
		so fix $t \in (0,\infty)$. 
It follows from Corollary~\ref{cor:almost-interior-points-for-semigroups} that the 
time-discrete operator semigroup $(T_{nt})_{n \in \bbN_0} = (T_t^n)_{n 
\in \bbN_0}$ satisfies all assumptions of the current theorem, so we conclude that $T_t^n$ converges with respect to the operator norm to an operator $Q_t \in \calL(X)$, and 
$Q_t$ is of the form claimed for the operator $Q$ in the theorem. 
We can now employ \cite[Theorem~2.1(b)]{GerlachLB} which shows that $Q := Q_t$ is actually
 independent of $t$ and that $T_t$ converges to $Q$ with respect 
to the operator norm as $t \to \infty$.
\end{proof}

\subsection*{Acknowledgements}

The authors are very grateful to the referees for many valuable comments and remarks, which enabled us to improve the presentation of the article, 
to generalize several results and to bring several results and proofs into a more concise form.

The first named author is indebted to the members of the Institute of Analysis at Technische Universit\"at Dresden for their kind invitation to a short stay 
during which some of the work on this article was done.

\end{document}